\documentclass[12pt]{article}

\usepackage{amsmath,amsthm,amssymb}

\usepackage{geometry}
\usepackage{graphicx}
\usepackage{enumitem}
\usepackage{float}
\usepackage{booktabs}

\usepackage{hyperref}
\hypersetup{
  colorlinks=true,
  linkcolor=blue,
  urlcolor=blue,
  citecolor=blue
}

\theoremstyle{plain}
\newtheorem{theorem}{Theorem}[section]

\newtheorem{proposition}[theorem]{Proposition}

\theoremstyle{definition}

\newtheorem{assumption}[theorem]{Assumption}

\newtheorem{property}[theorem]{Property}

\theoremstyle{remark}
\newtheorem{remark}[theorem]{Remark}

\newcommand{\opt}{\mathrm{opt}}

\title{Constructal Evolution as a Nonsmooth Dynamical System: Stability and Selection of Flow Architectures}

\author{
Pascal Stiefenhofer\\
Newcastle University, United Kingdom
}

\date{\today}

\begin{document}
\maketitle

\begin{abstract}
Constructal Law states that a finite-size flow system that persists in time
evolves its configuration so as to provide progressively easier access
to the currents that flow through it.
Classical Constructal theory derives hierarchical flow architectures
from static resistance minimization under finite-size constraints,
but many transport systems operate under irreversible limits that
induce regime switching and discontinuous adjustment laws.

We formulate Constructal evolution as an autonomous nonsmooth dynamical
system. The architectural configuration is modeled as the state of a
Filippov differential inclusion defined on a compact forward-invariant
admissible set. Irreversible transport constraints generate switching
manifolds across which the adjustment field is discontinuous.
A resistance dissipation inequality encodes the Constructal principle
of progressively improving access as a nonsmooth Lyapunov condition,
while a uniform contraction assumption provides spectral bounds on the
generalized Jacobians of the regime-dependent dynamics.

Under these conditions we prove that the resulting inclusion admits a
unique equilibrium architecture and that every admissible trajectory
converges to it exponentially. Finite size, irreversibility, and
resistance dissipation therefore imply existence, uniqueness, and
global stability of persistent flow configurations without invoking
static optimization.

As an application, the classical area--to--point transport hierarchy of
Bejan--B\u{a}descu--De~Vos is embedded in the dynamical framework.
The optimal assembly ratios appear as switching manifolds, while the
classical scaling relations arise as sliding invariant sets of the
Filippov inclusion. Their intersection defines the uniquely selected
globally attracting architecture.
\end{abstract}

\paragraph{Keywords.}
Constructal Law; nonsmooth dynamical systems; Filippov systems; contraction theory;
thermodynamic flow architecture; hybrid energy systems.

\paragraph{Mathematics Subject Classification (2020).}
34A60; 34C25; 37C75; 58A10; 80A20.

\section{Introduction}
\label{sec:introduction}

Constructal Law states that for a finite-size flow system to persist in time, it must evolve its configuration so as to provide progressively easier access to the currents that flow through it \cite{Bejan2000Book,BejanLorente2011,BejanLorente2013JAP}. Originating in heat-transfer theory \cite{Bejan1997IJHMT} and later synthesized in monograph form \cite{Bejan2000Book,Bejan2007Book}, the law extends thermodynamics beyond classical balances of energy and entropy toward a theory of evolving configuration. It emerged from the study of systems maintained away from equilibrium by irreversible currents, where spatial structure arises deterministically under finite-size and material constraints. Constructal reasoning has since been applied to hierarchical flow architectures across physics, biology, technology, and socio-economic organization \cite{BejanLorente2011,RochaLorenteBejan2012,LorenteBejan2019}.

In its classical mathematical form, Constructal design is typically derived from constrained extremum principles. A global resistance or cost functional is minimized subject to finite-size constraints, and hierarchical branching structures arise as optimal solutions \cite{BejanTsatsaronisMoran1996}. The canonical cooling-network problem \cite{Bejan1997IJHMT} demonstrated how tree-shaped conducting paths follow from resistance minimization in a finite domain. Similar optimization arguments have produced geometric scaling relations in economics \cite{BejanBadescuDeVos2000AE,BejanBadescuDeVos2000ECM}, energy systems \cite{BejanLorente2007}, and macroeconomic growth \cite{BejanErreraGunes2020}. Across these domains, hierarchy emerges as the configuration that optimally facilitates access under global constraints.

Despite its breadth, the prevailing mathematical framework of Constructal theory remains fundamentally static. Variational formulations identify stationary minimizers at fixed operating conditions, but they do not define an explicit evolution law for the architectural configuration. In particular, static optimality does not imply the existence of admissible trajectories, forward invariance of the feasible configuration set, robustness under perturbations, or uniqueness of dynamically selected architectures. Yet the clause ``persists in time'' in Constructal Law is intrinsically dynamical: it presupposes global existence of trajectories on a finite-size domain together with structural stability of the selected configuration. A purely variational formulation therefore leaves essential dynamical properties of Constructal evolution unaddressed.

This limitation becomes structural in systems governed by regime-dependent transport laws. Many Constructal applications involve distinct mechanisms dominating in different operating ranges. Microchannel cooling networks may transition between single-phase and flow-boiling regimes, where both heat-transfer and pressure-drop relations change with vapor quality \cite{Zhang2011IJHMT,Ariyo2020IJHMT,AriyoBelloOchende2021ICHMT}. Constructal designs incorporating phase change materials introduce latent-heat activation and discontinuous changes in effective thermal response \cite{Arshad2025PCM_ICHMT,LorenteBejan2019CurrentTrends}. At larger scales, river-basin organization idealizes transitions from diffuse to channelized transport \cite{Reis2006Geomorphology}. In such settings, constitutive relations change when capacity limits or thresholds become active, and architectural adjustment proceeds through discrete structural transitions.

The resulting evolution is therefore inherently nonsmooth. Classical smooth gradient flows or static variational arguments describe optimized endpoints, but they do not represent regime activation, switching structure, or stability properties of irreversible dynamics across transport regimes. A mathematically consistent formulation of Constructal evolution must therefore accommodate discontinuities while preserving existence, invariance, and stability properties of the architectural dynamics.

Filippov differential inclusions provide precisely such a framework \cite{Filippov1988}, supported by nonsmooth analysis \cite{Clarke1983} and the theory of piecewise-smooth dynamical systems \cite{diBernardo2008,Cortes2009}. Differential inclusions admit discontinuous regime switching through convexification while ensuring existence of absolutely continuous solutions under suitable regularity conditions. Within this formulation, finite size is encoded by compactness of the admissible configuration set, while persistence in time corresponds to forward invariance and forward completeness of the resulting dynamical system.

Dissipation alone, however, does not guarantee uniqueness of the selected architecture. A resistance dissipation inequality implies monotone decay of access and convergence toward a stationary balance set via an invariance principle, but it does not exclude multiplicity within that set. To obtain architectural selection, an additional dynamical mechanism is required. Contraction theory provides such a mechanism: spectral bounds on generalized Jacobians yield incremental exponential stability, forcing all admissible trajectories to converge toward one another \cite{LohmillerSlotine1998}. Extensions of contraction arguments to nonsmooth and Filippov systems establish rigorous incremental stability criteria under regime switching \cite{StiefenhoferGiesl2019NADE,Stiefenhofer2020NADE,StiefenhoferCAMS2025,StiefenhoferPLA2025}.

The present paper develops a dynamical formulation of Constructal Law consistent with its thermodynamic origin. Architectural evolution is modeled as an autonomous Filippov differential inclusion defined on a compact, forward-invariant admissible set $K$. A global resistance functional $\mathcal{R}:K\to\mathbb{R}_{\ge 0}$ quantifies access, while a nonsmooth dissipation inequality encodes the Constructal requirement of progressively easier access. Under an additional uniform contraction condition expressed through spectral bounds on regime-wise generalized Jacobians, the resulting dynamics admits a unique globally attracting equilibrium architecture.

The contribution of the paper is structural rather than merely interpretative. We show that, under finite size, irreversible regime activation, resistance dissipation, and uniform contraction, Constructal evolution can be formulated as a globally well-posed nonsmooth dynamical system whose asymptotic behavior is characterized by existence, uniqueness, and global exponential convergence of a selected architecture. To our knowledge, no prior formulation of Constructal Law establishes such global stability properties within a nonsmooth dynamical framework.

This viewpoint leads to the following interpretation:

\begin{quote}
Constructal evolution can be understood as a dissipative, viable, nonsmooth dynamical flow on a compact architectural state space.
\end{quote}

Within this framework, classical optimal hierarchies appear as invariant stationary points of an irreversible evolution law. Dissipation drives trajectories toward a stationary balance set, while contraction collapses this set to a singleton and enforces uniqueness. Hierarchy is therefore interpreted not merely as a minimizer of resistance, but as the globally stable attractor of admissible evolutionary dynamics.

The contributions of the paper are threefold. First, Constructal Law is embedded within the theory of Filippov differential inclusions, providing a mathematically consistent representation of finite size, regime switching, and constraint activation. Second, sufficient dissipation and contraction conditions are established that guarantee existence, uniqueness, and global exponential stability of the selected architecture. Third, the classical area--to--point hierarchy of \cite{BejanBadescuDeVos2000AE,BejanBadescuDeVos2000ECM} is reinterpreted dynamically: constructal ratios emerge as invariant sliding manifolds whose intersection defines the uniquely selected equilibrium architecture.

\section{A Dynamical Formulation of Constructal Law}
\label{sec:model}

Constructal Law states that a finite-size flow system that persists in
time must evolve its configuration so as to provide progressively easier
access to the currents that flow through it
\cite{Bejan2000Book,BejanLorente2011,BejanLorente2013JAP}.
The statement is inherently dynamical: it links finite size,
irreversible flow, and temporal persistence to progressive structural
reconfiguration of the flow architecture.
To translate this qualitative principle into a mathematically precise
form, we require a framework capable of representing three structural
features:
(i) architectural evolution occurs under irreversible dissipation,
(ii) admissible configurations are restricted by finite resources and
geometric constraints, and
(iii) regime activation and constraint binding may induce discontinuous
adjustment laws.
These properties naturally lead to an autonomous nonsmooth dynamical
formulation.

\subsection{Architectural dynamics}

Architectural configurations are represented by a state vector
\[
x(t)\in\mathbb{R}^n ,
\]
whose components describe macroscopic structural characteristics of the
transport architecture (e.g.\ channel capacities, conductances, or
geometric allocations).
Admissible configurations are restricted to a compact set
\[
K\subset\mathbb{R}^n ,
\]
reflecting finite-size resource constraints of the physical system.

Architectural evolution is modeled by the autonomous differential
inclusion
\begin{equation}
\dot{x}(t)\in F(x(t)),
\qquad
x(0)\in K,
\label{eq:filippov_inclusion}
\end{equation}
where
\[
F:K\rightrightarrows\mathbb{R}^n
\]
is a set-valued map describing admissible architectural adjustment
velocities.

To represent regime-dependent transport laws and constraint activation,
we adopt the framework of autonomous Filippov differential inclusions
\cite{Filippov1988,Clarke1983,diBernardo2008,Cortes2009}.
This formulation permits discontinuous adjustment fields while
preserving well-posedness of the resulting dynamics.

\begin{assumption}[Filippov regularity]
\label{ass:filippov_regularity}
The set-valued map $F$ has nonempty, compact, convex values,
is upper semicontinuous on $K$, and is locally bounded.
\end{assumption}

Under Assumption~\ref{ass:filippov_regularity},
the inclusion \eqref{eq:filippov_inclusion} admits absolutely continuous
solutions for every initial condition $x(0)\in K$
\cite[Ch.~2]{Filippov1988}.

\subsection{Persistence and viability}

Constructal Law requires that the flow system persists in time.
Mathematically, persistence corresponds to forward invariance of the
admissible configuration set. Let $T_K(x)$ denote the contingent tangent cone of $K$.

\begin{assumption}[Viability]
\label{ass:viability}
For every $x\in K$,
\begin{equation}
F(x)\cap T_K(x)\neq\emptyset .
\label{eq:viability_condition}
\end{equation}
\end{assumption}

Assumption~\ref{ass:viability} ensures that admissible velocities remain
tangent to the feasible configuration set.
By Nagumo’s viability theorem,
Assumptions~\ref{ass:filippov_regularity} and~\ref{ass:viability}
imply forward invariance:
\begin{equation}
x(0)\in K
\;\Rightarrow\;
x(t)\in K
\quad\text{for all } t\ge0 .
\label{eq:forward_invariance}
\end{equation}

Since $K$ is compact and $F$ locally bounded, solutions cannot escape in
finite time and are therefore forward complete.
Persistence in time is thus characterized mathematically by forward
invariance together with global existence of trajectories.

\subsection{Constructal dissipation}

The central clause of Constructal Law,
``progressively easier access,''
is represented through a resistance functional
\[
\mathcal{R}:K\to\mathbb{R}_{\ge0},
\]
which quantifies the global impedance of the transport architecture. Constructal evolution requires that admissible architectural adjustments
reduce this resistance over time.

\begin{assumption}[Constructal dissipation]
\label{ass:dissipation}
There exist $\alpha>0$ and a continuous function
\[
\Psi:K\to\mathbb{R}_{\ge0}
\]
such that
\begin{equation}
\sup_{v\in F(x)}
\mathcal{R}^\circ(x;v)
\le -\alpha\,\Psi(x),
\qquad x\in K .
\label{eq:dissipation_condition}
\end{equation}
\end{assumption}

Here $\mathcal{R}^\circ(x;v)$ denotes the Clarke generalized directional
derivative of $\mathcal R$.

For any solution $x(\cdot)$ of
\eqref{eq:filippov_inclusion},
Clarke’s chain rule implies that for almost every $t\ge0$
\begin{equation}
\frac{d}{dt}\mathcal{R}(x(t))
\le
\sup_{v\in F(x(t))}
\mathcal{R}^\circ(x(t);v).
\label{eq:clarke_chain_rule}
\end{equation}

Combining \eqref{eq:clarke_chain_rule} with
\eqref{eq:dissipation_condition} yields
\begin{equation}
\frac{d}{dt}\mathcal{R}(x(t))
\le -\alpha\Psi(x(t)),
\label{eq:resistance_decay}
\end{equation}
so the resistance functional is nonincreasing along all admissible
architectural trajectories and strictly decreasing whenever
$\Psi(x)>0$. Thus \eqref{eq:dissipation_condition} provides a nonsmooth Lyapunov
formulation of the Constructal principle of progressively improving
access.

\subsection{Interpretation}

When adjustment laws are piecewise smooth,
$F$ is interpreted as the autonomous Filippov regularization of a
discontinuous vector field.
Convexification preserves existence of solutions and admits sliding
motion along switching manifolds representing sustained operation under
binding constraints. The autonomous inclusion \eqref{eq:filippov_inclusion} on the compact
forward-invariant set $K$ therefore provides a dynamical realization of
Constructal Law:
\[
\begin{aligned}
\text{Finite size}
&\Longleftrightarrow
\text{compact architectural state space } K,
\\[4pt]
\text{Persistence in time}
&\Longleftrightarrow
\text{forward invariance \eqref{eq:forward_invariance}},
\\[4pt]
\text{Access}
&\Longleftrightarrow
\text{resistance functional } \mathcal{R},
\\[4pt]
\text{Progressively easier access}
&\Longleftrightarrow
\text{dissipation inequality \eqref{eq:dissipation_condition}},
\\[4pt]
\text{Regime activation}
&\Longleftrightarrow
\text{Filippov nonsmooth dynamics}.
\end{aligned}
\]

In Section~\ref{sec:model} we show that, under an additional uniform
contraction condition, the inclusion
\eqref{eq:filippov_inclusion}
admits a unique globally attracting equilibrium
\begin{equation}
x^\ast\in K,
\qquad
0\in F(x^\ast),
\label{eq:constructal_equilibrium}
\end{equation}
and every admissible trajectory converges exponentially to $x^\ast$.
The equilibrium \eqref{eq:constructal_equilibrium} represents the
dynamically selected \emph{Constructal architecture}.

\subsection{Architectural State Space and Conserved Flow}
\label{subsec:state_space}

Let
\begin{equation}
x:[0,\infty)\to \mathbb{R}^n,
\qquad t\mapsto x(t),
\label{eq:architectural_state}
\end{equation}
denote the \emph{architectural state}.
The vector $x(t)$ parametrizes the macroscopic configuration
through which conserved currents are transported.
Typical components represent effective conductances,
cross-sectional areas, channel densities,
territorial allocations, or transport capacities.
These variables therefore encode structural degrees of freedom
related to geometry and material allocation rather than
instantaneous fluxes themselves.

\medskip

\noindent
\textbf{Time-scale separation.} Constructal evolution involves two coupled processes operating on
different time scales.
Transport flows equilibrate rapidly within a given architecture,
whereas the architecture itself evolves slowly through structural
adjustment.
We therefore impose a separation of time scales in which
transport variables relax to quasi-steady states for each fixed
configuration of the architectural state. For every configuration
\begin{equation}
x\in K\subset\mathbb{R}^n,
\label{eq:config_space}
\end{equation}
the fast transport variables satisfy a conservation law describing
the movement of mass, energy, or other conserved quantities through
the network.
To retain generality, the fast transport physics is represented
abstractly as a parameter-dependent boundary-value problem posed on a
reflexive Banach space $V$.

\begin{assumption}[Fast transport well-posedness]
\label{ass:timescale}
There exists a nonempty compact set $K\subset\mathbb{R}^n$
such that for every $x\in K$:
\begin{enumerate}[label=(\roman*),leftmargin=2.2em]
\item the fast transport problem admits a unique quasi-steady solution
      $u(x)\in V$;
\item $u(x)$ induces a finite global resistance
      \begin{equation}
      R(x)\in\mathbb{R}_{\ge 0};
      \label{eq:resistance_function}
      \end{equation}
\item the mapping
      \(
      R:K\to\mathbb{R}_{\ge 0}
      \)
      is locally Lipschitz.
\end{enumerate}
\end{assumption}

The functional $R(x)$ measures the effective impedance of the
transport architecture.
Lower values correspond to configurations that provide easier access
to the transported currents.
Within the Constructal framework, $R$ therefore serves as a global
\emph{access functional} quantifying the efficiency of the flow
architecture. Assumption~\ref{ass:timescale} reduces the coupled
transport–architecture system to an autonomous dynamics for the slow
architectural variable \eqref{eq:architectural_state}.
Local Lipschitz continuity of $R$ ensures the existence of the Clarke
generalized gradient $\partial_C R(x)$ and the generalized directional
derivative
\begin{equation}
R^{\circ}(x;v)
=
\max_{\zeta\in\partial_C R(x)}
\langle \zeta,v\rangle,
\qquad x\in K,\; v\in\mathbb{R}^n.
\label{eq:Clarke_R}
\end{equation}
The quantity \eqref{eq:Clarke_R} represents the maximal instantaneous
rate of increase of global resistance under an architectural
perturbation in direction $v$.

\medskip

\noindent
\textbf{Finite admissible architecture.} Architectural configurations are restricted by material availability,
geometric feasibility, and capacity bounds.

\begin{assumption}[Finite admissible architecture]
\label{ass:finite_architecture}
The admissible configuration set $K\subset\mathbb{R}^n$
is nonempty and compact.
\end{assumption}

Compactness in Assumption~\ref{ass:finite_architecture}
formalizes the finite-size character of the physical system and
ensures that architectural trajectories remain bounded.

\medskip

\noindent
\textbf{Architectural evolution.}

Architectural evolution is modeled by the autonomous differential
inclusion
\begin{equation}
\dot{x}(t)\in F(x(t)),
\label{eq:architectural_evolution}
\end{equation}
where $F:K\rightrightarrows\mathbb{R}^n$
is obtained via Filippov regularization of regime-dependent
adjustment laws (Section~\ref{sec:model}).
These adjustment laws represent structural responses of the
architecture to transport imbalances or constraint activation.
The regularity assumptions on $F$ are stated in
Assumption~\ref{ass:filippov_regularity}.
Forward invariance of $K$ is imposed through the
Nagumo viability condition (Assumption~\ref{ass:viability}),
which guarantees global existence of solutions
to \eqref{eq:architectural_evolution}.
Under the time-scale separation of
Assumption~\ref{ass:timescale},
persistence in time and Constructal dissipation
(Assumption~\ref{ass:dissipation})
are therefore determined entirely by the autonomous inclusion
\eqref{eq:architectural_evolution}
on the compact admissible set $K$.
The resulting dynamical selection mechanism
for Constructal architectures is developed in
Section~\ref{sec:model}.

\subsection{Irreversibility and Architectural Persistence}
\label{subsec:irreversibility}

Constructal Law applies to flow systems that \emph{persist in time}.
Within a dynamical formulation, persistence means that admissible
architectural configurations remain physically realizable throughout
the evolution and that the adjustment dynamics exists for all future
times. Mathematically, persistence is expressed through two properties of the
architectural dynamics: forward invariance of the admissible
configuration set and forward completeness of the evolution law.

\begin{property}[Forward invariance]
\label{prop:forward_invariance}
If $x(0)\in K$, then every absolutely continuous solution of
\eqref{eq:architectural_evolution} satisfies
\[
x(t)\in K
\quad \text{for all } t\ge 0.
\]
\end{property}

\begin{property}[Forward completeness]
\label{prop:forward_completeness}
For every initial condition $x(0)\in K$,
every maximal solution of
\eqref{eq:architectural_evolution}
is defined on $[0,\infty)$.
\end{property}

Together, Properties~\ref{prop:forward_invariance}
and~\ref{prop:forward_completeness}
provide the precise mathematical meaning of the Constructal clause
``persists in time.''

\medskip

\noindent
\textbf{Irreversible constraints and admissible architectures.}

Finite transport systems operate under irreversible unilateral
constraints arising from material conservation,
geometric feasibility,
capacity limits,
and technological bounds.
These restrictions determine the admissible architecture set
\begin{equation}
K \subset \mathbb{R}^n,
\label{eq:K_irreversibility}
\end{equation}
introduced in Assumption~\ref{ass:finite_architecture}.
Configurations outside $K$ violate physical or resource constraints
and are therefore inadmissible.

Architectural evolution is governed by the autonomous Filippov
differential inclusion
\begin{equation}
\dot{x}(t) \in F(x(t)),
\label{eq:irreversibility_filippov}
\end{equation}
where $F:K\rightrightarrows\mathbb{R}^n$
is the Filippov extension of regime-dependent adjustment laws
and satisfies the regularity conditions of
Assumption~\ref{ass:filippov_regularity}.
Under these conditions the inclusion
\eqref{eq:irreversibility_filippov}
admits absolutely continuous solutions defined on a maximal interval
$[0,T_{\max})$ \cite[Ch.~2]{Filippov1988}.

\medskip

\noindent
\textbf{Viability and constraint compatibility.} To ensure that architectural trajectories remain within the admissible
set $K$, we impose a geometric compatibility condition between the
vector field and the boundary of $K$.
Let $T_K(x)$ denote the contingent tangent cone of $K$.
The Nagumo viability condition requires
\begin{equation}
F(x) \cap T_K(x) \neq \emptyset
\qquad \text{for all } x\in K.
\label{eq:viability_irreversibility}
\end{equation}

Condition \eqref{eq:viability_irreversibility}
ensures that at every admissible configuration there exists at least
one admissible velocity that points inside $K$.
Under Assumption~\ref{ass:filippov_regularity},
Nagumo’s theorem implies forward invariance of $K$,
which establishes Property~\ref{prop:forward_invariance}.

\medskip

\noindent
\textbf{Global existence of architectural evolution.} Because the admissible set $K$ is compact and the velocity set
$F(x)$ is locally bounded,
architectural velocities remain uniformly bounded on $K$.
Standard continuation arguments for differential inclusions therefore
exclude finite-time blow-up of solutions,
so that $T_{\max}=+\infty$.
This establishes Property~\ref{prop:forward_completeness}. Consequently, for every initial architecture $x(0)\in K$
there exists at least one absolutely continuous trajectory
\[
x:[0,\infty)\to K
\]
satisfying \eqref{eq:irreversibility_filippov}.

Irreversibility is encoded geometrically through the tangent-cone
restriction \eqref{eq:viability_irreversibility},
which restricts admissible architectural velocities without
terminating evolution.
Persistence in time therefore corresponds precisely to global
existence of trajectories together with forward invariance within the
compact admissible architecture set $K$.

\subsection{Constructal Access Functional}
\label{subsec:access_functional}

Constructal Law asserts that a persistent flow system evolves so as to
provide progressively easier access to the currents that flow through it.
In transport theory, access is quantified by global impedance measures
such as effective resistance, total dissipation, or entropy production.
We formalize this concept within the autonomous architectural framework.

Let
\begin{equation}
x \in K \subset \mathbb{R}^n,
\label{eq:architectural_state}
\end{equation}
where the admissible architecture set $K$ is nonempty and compact
(Assumption~\ref{ass:finite_architecture}).
For each fixed configuration $x\in K$, the fast transport variables
\begin{equation}
u(x) \in V
\label{eq:transport_solution}
\end{equation}
solve a quasi–steady boundary-value problem
\begin{equation}
\mathcal A(x)u = f,
\label{eq:transport_operator}
\end{equation}
posed on a Hilbert space $V$.

\begin{assumption}[Uniform fast transport well-posedness]
\label{ass:fast_transport}
For every $x\in K$:
\begin{enumerate}[label=(\roman*),leftmargin=2.2em]
\item $\mathcal A(x)$ is bounded and uniformly coercive on $V$;
\item problem \eqref{eq:transport_operator} admits a unique weak solution $u(x)\in V$;
\item the solution map $x\mapsto u(x)$ is locally Lipschitz on $K$.
\end{enumerate}
\end{assumption}

Assumption~\ref{ass:fast_transport} ensures that the fast transport
subsystem admits a unique quasi–steady response for each architectural
configuration.

Let
\begin{equation}
\Phi:V\times K\rightarrow \mathbb{R}_{\ge0}
\label{eq:energetic_cost}
\end{equation}
denote a global energetic cost functional measuring transport impedance
(e.g.\ total dissipation or effective resistance).

\begin{assumption}[Regularity of access functional]
\label{ass:R_regular}
The mapping $\Phi$ is locally Lipschitz in both arguments and bounded
below.
\end{assumption}

The \emph{Constructal access functional} (global resistance) is defined by
\begin{equation}
\mathcal R(x) := \Phi\big(u(x),x\big),
\qquad x\in K .
\label{eq:access_functional}
\end{equation}

\paragraph{Regularity of the resistance functional.}
Under Assumptions~\ref{ass:fast_transport} and~\ref{ass:R_regular},
the mapping
\begin{equation}
\mathcal R:K\rightarrow\mathbb{R}_{\ge0}
\label{eq:R_mapping}
\end{equation}
is locally Lipschitz on $K$. Since $K$ is compact, $\mathcal R$ is
bounded below and therefore attains a minimum on $K$. Because $\mathcal R$ is locally Lipschitz, it admits the Clarke
generalized gradient $\partial_C\mathcal R(x)$ and generalized
directional derivative
\begin{equation}
\mathcal R^\circ(x;v)
=
\max_{\zeta\in\partial_C\mathcal R(x)}
\langle \zeta , v \rangle,
\qquad x\in K,\; v\in\mathbb R^n .
\label{eq:clarke_directional}
\end{equation}

The quantity \eqref{eq:clarke_directional} represents the maximal
instantaneous rate of increase of resistance under an architectural
perturbation in direction $v$.
Consequently,
\begin{equation}
\mathcal R^\circ(x;v)\le0
\label{eq:R_nonincrease}
\end{equation}
expresses nonincreasing global resistance.

When combined with the dynamical system
\eqref{eq:filippov_inclusion},
this property yields a Lyapunov-type description of Constructal
evolution.

\subsubsection{Constructal Dissipation Inequality}
\label{subsubsec:constructal_dissipation}

Constructal Law requires that architectural evolution provide
progressively easier access to the transported currents.
Within the nonsmooth dynamical framework, this principle is encoded
as a Lyapunov-type dissipation condition for the Filippov inclusion
\eqref{eq:filippov_inclusion}.

\begin{assumption}[Constructal dissipation]
\label{ass:dissipation}
There exist $\alpha>0$ and a continuous function
\[
\Psi:K\to\mathbb R_{\ge0}
\]
such that
\begin{equation}
\sup_{v\in F(x)}
\mathcal R^\circ(x;v)
\le
-\alpha\,\Psi(x),
\qquad x\in K .
\label{eq:constructal_dissipation}
\end{equation}
\end{assumption}

Define the stationary balance set
\begin{equation}
\mathcal E :=
\{x\in K : \Psi(x)=0\}.
\label{eq:balance_set_analysis}
\end{equation}

\paragraph{Monotone resistance decay.}
Let $x(\cdot)$ be any absolutely continuous solution of
\eqref{eq:filippov_inclusion}.
Clarke’s chain rule implies that for almost every $t\ge0$
\begin{equation}
\frac{d}{dt}\mathcal R(x(t))
\le
\sup_{v\in F(x(t))}
\mathcal R^\circ(x(t);v).
\label{eq:clarke_chain}
\end{equation}

Combining \eqref{eq:clarke_chain} with
\eqref{eq:constructal_dissipation} yields
\begin{equation}
\frac{d}{dt}\mathcal R(x(t))
\le
-\alpha\,\Psi(x(t))
\qquad \text{for a.e. } t\ge0 .
\label{eq:R_decay}
\end{equation}

Hence $\mathcal R(x(t))$ is nonincreasing along all admissible
architectural trajectories and strictly decreasing whenever
$\Psi(x(t))>0$.
Since $\mathcal R$ is bounded below on the compact set $K$, integration
of \eqref{eq:R_decay} gives
\begin{equation}
\int_0^\infty \Psi(x(s))\,ds < \infty .
\label{eq:psi_integrable}
\end{equation}

\paragraph{Asymptotic balance.}
Standard LaSalle-type invariance principles for differential inclusions
\cite{Clarke1983} imply that the $\omega$-limit set of every trajectory
is nonempty, compact, weakly invariant, and satisfies
\begin{equation}
\omega(x_0)\subseteq \mathcal E .
\label{eq:omega_subset_E}
\end{equation}

Consequently architectural evolution monotonically reduces global
resistance and asymptotically approaches the stationary balance set
$\mathcal E$.
The clause ``progressively easier access'' is therefore realized
rigorously by the dissipation inequality
\eqref{eq:constructal_dissipation}.

\subsubsection{Structural Sufficiency for Dissipation}
\label{subsubsec:dissipation_sufficiency}

The Constructal dissipation inequality
\eqref{eq:constructal_dissipation}
is not imposed \emph{ad hoc}. It arises naturally from projected
subgradient dynamics on a convex admissible architecture set.
The following result establishes a structural condition under which
Assumption~\ref{ass:dissipation} holds automatically.

\begin{proposition}[Projected subgradient sufficiency]
\label{prop:dissipation_sufficient}
Let $K\subset\mathbb{R}^n$ be nonempty, closed, and convex, and let
$\mathcal R:K\to\mathbb{R}$ be locally Lipschitz.
Define the set-valued map
\begin{equation}
F(x)
:=
\Pi_{T_K(x)}\!\big(-\partial_C \mathcal R(x)\big),
\qquad x\in K,
\label{eq:projected_subgradient}
\end{equation}
where $T_K(x)$ denotes the Bouligand tangent cone of $K$ and
$\Pi_{T_K(x)}$ is the Euclidean projection onto it.

Define the residual function
\begin{equation}
\Psi(x)
:=
\mathrm{dist}^2\!\big(
0,\,
\partial_C \mathcal R(x)+N_K(x)
\big),
\label{eq:KKT_residual}
\end{equation}
where $N_K(x)$ denotes the Clarke normal cone to $K$.

Then the dissipation inequality
\begin{equation}
\sup_{v\in F(x)}
\mathcal R^\circ(x;v)
\le
-\Psi(x)
\qquad \text{for all } x\in K
\label{eq:dissipation_result}
\end{equation}
holds.
Consequently Assumption~\ref{ass:dissipation}
is satisfied with $\alpha=1$.
\end{proposition}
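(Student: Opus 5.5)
The plan is to reduce the inequality to the Moreau decomposition with respect to the closed convex cone $T_K(x)$. Because $K$ is convex, $T_K(x)$ is a closed convex cone and its polar is exactly the normal cone $N_K(x)$; for convex $K$ the Clarke and convex normal cones coincide. So for every $w\in\mathbb{R}^n$ we have the orthogonal splitting
\[
w=\Pi_{T_K(x)}(w)+\Pi_{N_K(x)}(w),\qquad \langle \Pi_{T_K(x)}(w),\Pi_{N_K(x)}(w)\rangle=0 .
\]
Fix $x\in K$ and an element $v=\Pi_{T_K(x)}(-\xi)\in F(x)$ with $\xi\in\partial_C\mathcal R(x)$. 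Taking $w=-\xi$ gives $\langle \xi,v\rangle=-\|v\|^2$. Also $\|v\|=\|{-\xi}-\Pi_{N_K(x)}(-\xi)\|=\mathrm{dist}(-\xi,N_K(x))=\mathrm{dist}(0,\xi+N_K(x))$. Minimising over $\xi$ then yields
\[
\|v\|^2\ \ge\ \mathrm{dist}^2\big(0,\partial_C\mathcal R(x)+N_K(x)\big)=\Psi(x).
\]
Hence $\langle\xi,v\rangle\le-\Psi(x)$ for the generating subgradient $\xi$.

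The remaining step is to pass from $\langle\xi,v\rangle$ to $\mathcal R^\circ(x;v)=\max_{\zeta\in\partial_C\mathcal R(x)}\langle\zeta,v\rangle$, as in \eqref{eq:clarke_directional}, and then take the supremum over $v\in F(x)$. When $\partial_C\mathcal R(x)=\{\nabla\mathcal R(x)\}$, i.e.\ $\mathcal R$ is strictly differentiable at $x$, the maximum is attained at $\zeta=\xi$ and \eqref{eq:dissipation_result} follows with $\alpha=1$. That covers the $C^1$ case, and Assumption~\ref{ass:dissipation} holds with the continuous $\Psi$ above; continuity of $\Psi$ should be checked separately, for instance under $C^1$ regularity.

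I expect this last step to be the real obstacle, and I do not think it can be carried out in the stated generality. Take $n=1$, $K=\mathbb{R}$, $\mathcal R(x)=|x|$ and $x=0$. Then $\partial_C\mathcal R(0)=[-1,1]$, $N_K=\{0\}$, $F(0)=[-1,1]$, $\Psi(0)=0$, and $\sup_{v\in F(0)}\mathcal R^\circ(0;v)=\sup_{v}|v|=1>0$. So \eqref{eq:dissipation_result} fails: the supremum of the maximum over all pairs $(\zeta,v)$ cannot be controlled by the diagonal terms $\langle\xi,v\rangle$.

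I would therefore first try to salvage the result in one of two ways. The first is to add the hypothesis that $\mathcal R$ is strictly differentiable on $K$, which makes the argument above a complete proof. The second is to replace $F$ by the minimal-norm selection $v(x)=-\Pi_{T_K(x)}$-projected element of least norm, use $\mathcal R$ Clarke-regular, and estimate only along trajectories. Here one would use the chain rule $\frac{d}{dt}\mathcal R(x(t))=\langle\zeta,\dot x\rangle$ for all $\zeta\in\partial_C\mathcal R(x(t))$, valid a.e.\ for regular $\mathcal R$, in place of the pointwise supremum. The second route yields \eqref{eq:R_decay} directly, which is what the paper actually uses downstream.
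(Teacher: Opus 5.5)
Your analysis is correct, and so is your counterexample. For $\mathcal R(x)=|x|$ at $x=0$ you get $F(0)=[-1,1]$, $\Psi(0)=0$ and $\mathcal R^\circ(0;1)=1$. Nothing changes if $K=[-1,1]$, so the paper's standing compactness and nonnegativity do not help. Nor is the failure confined to the balance set: $\mathcal R(x_1,x_2)=2|x_1|+x_2$ at the origin has $\Psi=1$ but $\sup_{v\in F(0)}\mathcal R^\circ(0;v)=3$.

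So the proposition is false as stated, and the paper's proof breaks exactly at the step you isolated. Up to that point it is your argument: the Moreau splitting $-\zeta=v+\eta$ gives $\langle\zeta,v\rangle=-\|v\|^2$ for the subgradient $\zeta$ that \emph{generates} $v$. The paper then claims that for $v\in F(x)$ some $\zeta$ satisfies $\max_{\xi\in\partial_C\mathcal R(x)}\langle\xi,v\rangle\le\langle\zeta,v\rangle$, and substitutes $v=-\zeta-\eta$ for that same $\zeta$. This silently identifies the \emph{maximizing} subgradient with the generating one, and the inequality actually runs the other way. In your example, $v=1$ is generated by $\zeta=-1$, but $\langle\cdot,v\rangle$ is maximized at $\zeta=1$. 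The paper also asserts $\|v\|^2=\Psi(x)$, where only your inequality $\|v\|^2\ge\Psi(x)$ holds; that slip happens to point in the harmless direction.

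Two comments on your repairs. First, under strict differentiability your argument is complete and even gives equality. However, $C^1$ smoothness does not make $\Psi$ continuous at $\partial K$: for $K=[0,1]$ and $\mathcal R(x)=x$ one has $\Psi(0)=0$ and $\Psi\equiv1$ on $(0,1]$. So the closing ``consequently'' still fails as stated, unless you work in the interior or weaken continuity to lower semicontinuity. Lower semicontinuity does hold in general, by the closed graphs of $\partial_C\mathcal R$ and $N_K$ and the local boundedness of $\partial_C\mathcal R$. It is also all that Step~5 of the proof of Theorem~\ref{thm:constructal_selection} uses.

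Second, for your other route, the least-norm selection already satisfies the pointwise inequality without any regularity. Let $w=\xi^\ast+\eta^\ast$ be the least-norm element of $\partial_C\mathcal R(x)+N_K(x)$. The variational inequality $\langle w,s-w\rangle\ge0$ on that set gives $\langle w,\eta^\ast\rangle=0$ and $-w\in T_K(x)$, hence $-w=\Pi_{T_K(x)}(-\xi^\ast)\in F(x)$. It also gives $\langle\zeta,-w\rangle\le-\|w\|^2$ for all $\zeta\in\partial_C\mathcal R(x)$, i.e.\ $\mathcal R^\circ(x;-w)=-\Psi(x)$. But that selection is discontinuous, and its Filippov regularization reproduces your $|x|$ example. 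So the trajectory-level argument is the right fix, and there the least-norm choice is unnecessary: for Clarke-regular $\mathcal R$ the chain rule holds with every $\zeta\in\partial_C\mathcal R(x(t))$, so you may take $\zeta$ to be the subgradient generating $\dot x(t)$.
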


\begin{proof}
Fix $x\in K$.
Because $K$ is closed and convex,
$T_K(x)$ is a closed convex cone and its polar cone is
\[
N_K(x)=T_K(x)^\circ .
\]

Let $\zeta\in\partial_C\mathcal R(x)$ and define
\[
v := \Pi_{T_K(x)}(-\zeta).
\]
Since $T_K(x)$ is closed and convex,
the Euclidean projection is uniquely defined. By Moreau's decomposition theorem for convex cones,
there exists a unique $\eta\in N_K(x)$ such that
\begin{equation}
-\zeta = v+\eta,
\qquad
\langle v,\eta\rangle = 0.
\label{eq:Moreau_decomposition}
\end{equation}

Rearranging yields
\[
v=-\zeta-\eta,
\qquad
\zeta+\eta=-v.
\]

Because $\zeta\in\partial_C\mathcal R(x)$ and $\eta\in N_K(x)$,
we obtain
\[
\zeta+\eta \in \partial_C\mathcal R(x)+N_K(x).
\]

Moreover, by orthogonality in
\eqref{eq:Moreau_decomposition},
\begin{equation}
\|\zeta+\eta\|^2=\|v\|^2 .
\label{eq:projection_norm}
\end{equation}

Now compute the Clarke directional derivative.
Since $\mathcal R^\circ(x;\cdot)$ is convex and positively homogeneous,
for any $v\in F(x)$ there exists $\zeta\in\partial_C\mathcal R(x)$
such that
\[
\mathcal R^\circ(x;v)
=
\max_{\xi\in\partial_C\mathcal R(x)}
\langle \xi,v\rangle
\le
\langle \zeta,v\rangle .
\]

Using $v=-\zeta-\eta$ gives
\[
\langle\zeta,v\rangle
=
-\|\zeta\|^2-\langle\zeta,\eta\rangle .
\]

Taking the inner product of
\eqref{eq:Moreau_decomposition}
with $\eta$ yields
\[
0
=
\langle v,\eta\rangle
=
\langle -\zeta-\eta,\eta\rangle
=
-\langle\zeta,\eta\rangle-\|\eta\|^2 .
\]

Hence
\[
\langle\zeta,\eta\rangle=-\|\eta\|^2 .
\]

Substituting gives
\[
\langle\zeta,v\rangle
=
-\|\zeta\|^2+\|\eta\|^2
=
-\|\zeta+\eta\|^2 .
\]

Using \eqref{eq:projection_norm} yields
\begin{equation}
\mathcal R^\circ(x;v)\le -\|v\|^2 .
\label{eq:directional_decay}
\end{equation}

By definition of $\Psi(x)$ in \eqref{eq:KKT_residual},
\[
\|v\|^2
=
\mathrm{dist}^2\!\big(
0,\,
\partial_C\mathcal R(x)+N_K(x)
\big)
=
\Psi(x).
\]

Combining this with \eqref{eq:directional_decay}
yields
\[
\mathcal R^\circ(x;v)
\le
-\Psi(x).
\]

Taking the supremum over $v\in F(x)$ gives
\eqref{eq:dissipation_result}.
\end{proof}

\subsection{Incremental Contraction from Resistance-Driven Dynamics}
\label{subsec:contraction}

Constructal dissipation implies that the resistance functional
$\mathcal R$ is nonincreasing along every admissible trajectory and that
all $\omega$-limit points lie in the balance set
$\mathcal E$ defined in \eqref{eq:balance_set_analysis}.
Dissipation therefore provides the thermodynamic direction of
Constructal evolution. It does \emph{not} by itself ensure uniqueness of
the limiting architecture, since $\mathcal E$ may contain multiple
configurations. To obtain \emph{architectural selection} we require an
incremental stability mechanism forcing admissible trajectories to
converge toward one another (contraction). We study the autonomous Filippov inclusion
\begin{equation}
\dot x \in F(x),
\qquad x(0)\in K,
\label{eq:autonomous_filippov}
\end{equation}
where $F$ is the Filippov regularization of the regime-dependent vector
field introduced in Section~\ref{sec:model}.
By Assumptions~\ref{ass:filippov_regularity} and~\ref{ass:viability},
every solution is defined for all $t\ge 0$ and remains in the compact
forward-invariant set $K$.

\subsubsection*{Generalized linearization under switching}

For $x\notin\Sigma$ the classical Jacobian is
\begin{equation}
J(x)=\nabla f(x).
\label{eq:smooth_jacobian}
\end{equation}
At switching points the appropriate linearization is the Clarke
generalized Jacobian
\begin{equation}
\partial_C f(x)
=
\operatorname{co}\!\left\{
\lim_{k\to\infty}\nabla f(x_k)
:\; x_k\to x,\; x_k\notin\Sigma
\right\},
\label{eq:clarke_jacobian}
\end{equation}
which is nonempty, compact, and convex. We denote the resulting set of
admissible linearizations by
\begin{equation}
\mathcal A(x):=\partial_C f(x).
\label{eq:linearization_set}
\end{equation}

\subsubsection*{Resistance-driven adjustment}

\begin{assumption}[Resistance-driven architectural adjustment]
\label{ass:resistance_driven}
There exists a locally Lipschitz mobility matrix
$M:K\to\mathbb R^{n\times n}$ that is symmetric and uniformly positive
definite, i.e.
\begin{equation}
M(x)=M(x)^\top \succeq mI
\qquad \text{for some } m>0 \text{ and all } x\in K .
\label{eq:mobility_lower_bound}
\end{equation}
Moreover, in each smooth regime $\Omega_j$ the adjustment field admits
the resistance-driven form
\begin{equation}
f_j(x) = -M(x)\nabla \mathcal R(x),
\qquad x\in \Omega_j,
\label{eq:regime_gradient_form}
\end{equation}
and the Filippov map satisfies, for all $x\in K$,
\begin{equation}
F(x)\subseteq -M(x)\,\partial_C \mathcal R(x) + T_K(x),
\label{eq:resistance_driven}
\end{equation}
where $T_K(x)$ denotes the contingent tangent cone of $K$.
\end{assumption}

Assumption~\ref{ass:resistance_driven} encodes the principle that the
architecture adjusts in the direction of decreasing resistance, subject
to feasibility constraints.

\subsubsection*{Curvature of the resistance landscape}

\begin{assumption}[Uniform curvature of resistance]
\label{ass:R_curvature}
The resistance functional $\mathcal R$ is $C^2$ on an open neighborhood
of $K$ and satisfies the uniform strong convexity bound
\begin{equation}
\nabla^2 \mathcal R(x) \succeq \lambda I,
\qquad \forall x\in K,
\label{eq:R_curvature}
\end{equation}
for some $\lambda>0$.
\end{assumption}

Condition \eqref{eq:R_curvature} states that resistance penalizes
architectural deviations at least quadratically on $K$.

\subsubsection*{Transport-induced contraction for constant mobility}

We first treat the cleanest case, which yields an explicit contraction
rate.

\begin{proposition}[Contraction induced by resistance reduction: constant mobility]
\label{prop:transport_contraction}
Suppose Assumptions~\ref{ass:resistance_driven} and~\ref{ass:R_curvature}
hold and assume that $M(\cdot)\equiv M$ is constant on $K$.
Then, in each smooth regime $\Omega_j$, the dynamics
\begin{equation}
\dot x = f_j(x)= -M\nabla \mathcal R(x)
\label{eq:smooth_gradient_flow}
\end{equation}
is uniformly contracting on $K$ in the Euclidean norm.
More precisely, for all $x\in K\setminus\Sigma$,
\begin{equation}
\mu\!\big(J(x)\big)\le -m\lambda,
\label{eq:contraction_bound}
\end{equation}
where $\mu(\cdot)$ denotes the Euclidean matrix measure
\begin{equation}
\mu(A):=\lambda_{\max}\!\left(\frac{A+A^\top}{2}\right),
\label{eq:matrix_measure_def}
\end{equation}
and the system contracts with rate
\begin{equation}
\nu := m\lambda .
\label{eq:contraction_rate}
\end{equation}
\end{proposition}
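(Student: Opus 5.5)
The plan is to work regime by regime and reduce the claim to a pointwise eigenvalue inequality for a product of two symmetric positive definite matrices. Fix a regime $\Omega_j$ and a point $x\in K\setminus\Sigma$. By Assumption~\ref{ass:resistance_driven} with $M(\cdot)\equiv M$ constant, and by Assumption~\ref{ass:R_curvature}, the field $f_j=-M\nabla\mathcal R$ is $C^1$ near $x$. Differentiating gives
\[
J(x)=\nabla f_j(x)=-M H(x),\qquad H(x):=\nabla^2\mathcal R(x)\succeq\lambda I .
\]
No derivative of $M$ appears, because $M$ is constant; this is exactly why the constant-mobility case is treated first. By the definition \eqref{eq:matrix_measure_def}, the target bound \eqref{eq:contraction_bound} is equivalent to
\[
\lambda_{\max}\!\Big(-\tfrac12\big(MH+HM\big)\Big)\le -m\lambda,
\qquad\text{i.e.}\qquad
\tfrac12\big(MH+HM\big)\succeq m\lambda\, I .
\]

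To prove this I will split the mobility as $M=mI+P$ with $P:=M-mI\succeq 0$, using \eqref{eq:mobility_lower_bound}. Then
\[
\tfrac12(MH+HM)=mH+\tfrac12(PH+HP).
\]
The first term satisfies $mH\succeq m\lambda I$ directly by \eqref{eq:R_curvature}. It then remains to show that the Jordan product $\tfrac12(PH+HP)$ is positive semidefinite. Once this holds, Weyl's inequality gives $\mu(J(x))\le -m\lambda$ uniformly in $x\in K\setminus\Sigma$.

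The contraction statement with rate $\nu=m\lambda$ then follows by the standard matrix-measure argument. For two solutions $x,y$ of the smooth regime, set $\delta=x-y$. By the mean value theorem in integral form,
\[
\tfrac{d}{dt}\tfrac12\|\delta\|^2=\Big\langle \delta,\int_0^1 J(y+s\delta)\,ds\;\delta\Big\rangle\le -m\lambda\|\delta\|^2 .
\]
Gr\"onwall's inequality then yields $\|\delta(t)\|\le e^{-m\lambda t}\|\delta(0)\|$. This requires the segment joining $x$ and $y$ to stay in the regime. I will take that from convexity of the regime or of $K$ if it is available; otherwise I will state the contraction infinitesimally, through the bound $\mu(J)\le -m\lambda$ alone, as the proposition does.

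The main obstacle is the semidefiniteness of $\tfrac12(PH+HP)$. For commuting $P$ and $H$ it is immediate, since both can be diagonalised simultaneously. This covers in particular the scalar mobility $M=mI$, where $P=0$. For general constant symmetric $M$, however, the Jordan product of two positive semidefinite matrices need not be positive semidefinite. I would first try to exploit the gradient structure of \eqref{eq:regime_gradient_form} to obtain commutation of $M$ with $\nabla^2\mathcal R$ on each regime. If that fails, I would write the Euclidean claim under that compatibility condition, and otherwise pass to the weighted norm $\|\cdot\|_{M^{-1}}$, where $M^{-1/2}JM^{1/2}=-M^{1/2}HM^{1/2}$ is symmetric with spectrum bounded above by $-m\lambda$. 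This is the step where the argument may genuinely require strengthening the hypotheses.
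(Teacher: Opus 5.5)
Your reduction is the same as the paper's: $J(x)=-MH$ with $H=\nabla^2\mathcal R(x)$, and \eqref{eq:contraction_bound} is equivalent to $\tfrac12(MH+HM)\succeq m\lambda I$. The paper settles this in one line, ``because $M\succeq mI$ and $\nabla^2\mathcal R\succeq\lambda I$''. The same inference reappears in Proposition~\ref{prop:contraction_sufficient} as $v^\top MHv\ge m\lambda\|v\|^2$. The step you flagged is not just hard to close: it is false, so your diagnosis is correct and the proposition fails as stated.

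Take $\mathcal R(x)=\tfrac12x^\top Hx$ on $\mathbb R^2$ with $H=\begin{pmatrix}1&1\\1&2\end{pmatrix}$, so $\lambda=(3-\sqrt5)/2$, and $M=\mathrm{diag}(1,10)$, so $m=1$. There is a single regime with $\Sigma=\emptyset$, and $K=\{x^\top M^{-1}x\le1\}$ is even forward invariant. Then
\[
\tfrac12(MH+HM)=\begin{pmatrix}1&5.5\\5.5&20\end{pmatrix},\qquad \det=-10.25<0,
\]
so $\mu(J)=-\lambda_{\min}\bigl(\tfrac12(MH+HM)\bigr)\approx0.48>0$. Two trajectories whose initial difference lies along the negative eigendirection initially move apart in the Euclidean norm. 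Hence neither \eqref{eq:contraction_bound} nor Euclidean contraction with constant $1$ at any positive rate holds without further hypotheses.

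The same example rules out your first fallback. The gradient form \eqref{eq:regime_gradient_form} with constant $M$ imposes no relation between $M$ and $\nabla^2\mathcal R$, so commutation cannot be extracted from it. Your other two options are the correct repairs.

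\emph{Commutation.} If you add the hypothesis that $M$ commutes with $\nabla^2\mathcal R(x)$ for all $x\in K$ (e.g.\ $M=mI$), your splitting $M=mI+P$ gives the Euclidean bound exactly as you wrote.

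\emph{Weighted metric.} Without that hypothesis, pass to the metric $W=M^{-1}$:
\[
M^{-1}J+J^\top M^{-1}=-2H\preceq-2\lambda I\preceq-2m\lambda\,M^{-1}.
\]
So $\delta^\top M^{-1}\delta$ decays at rate $2m\lambda$, and $\|\delta(t)\|\le\sqrt{\lambda_{\max}(M)/m}\;e^{-m\lambda t}\|\delta(0)\|$. This weighted inequality, not the Euclidean bound $\mu(J)\le-\nu$, is what the Lyapunov computation in the proof of Theorem~\ref{thm:global_contraction} actually requires. There $W\equiv M^{-1}$, $\dot W=0$, $\underline\lambda=1/\lambda_{\max}(M)$ and $\overline\lambda=1/m$. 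The honest version of the proposition is therefore the weighted statement with overshoot constant $C=\sqrt{\lambda_{\max}(M)/m}\ge1$.
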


\begin{proof}
Fix a regime $\Omega_j$ and let $x\in\Omega_j$.
Since $M$ is constant and $\mathcal R\in C^2$ near $K$, the Jacobian of
\eqref{eq:smooth_gradient_flow} is
\[
J(x)= -M\nabla^2\mathcal R(x).
\]
Because $M=M^\top\succeq mI$ and $\nabla^2\mathcal R(x)\succeq \lambda I$,
the symmetric part satisfies
\[
\frac{J(x)+J(x)^\top}{2}
=
-\frac{M\nabla^2\mathcal R(x)+\nabla^2\mathcal R(x)\,M}{2}
\preceq -m\lambda I.
\]
Taking $\lambda_{\max}$ yields
\[
\mu\!\big(J(x)\big)
=
\lambda_{\max}\!\left(\frac{J(x)+J(x)^\top}{2}\right)
\le -m\lambda,
\]
which is \eqref{eq:contraction_bound}. Defining $\nu:=m\lambda$ gives
\eqref{eq:contraction_rate}.
\end{proof}

\begin{remark}[Extension across switching via Filippov convexification]
\label{rem:filippov_contraction_extension}
The estimate \eqref{eq:contraction_bound} holds regime-wise. Since the
matrix measure $\mu(\cdot)$ defined in \eqref{eq:matrix_measure_def} is
convex,
\[
\mu\!\big(\operatorname{co}\{A_1,\dots,A_q\}\big)
\le \max_{i=1,\dots,q}\mu(A_i),
\]
the same upper bound applies to every matrix in the generalized
Jacobian $\mathcal A(x)=\partial_C f(x)$ defined in
\eqref{eq:linearization_set}. Hence the contraction rate $\nu$ is
preserved under Filippov convexification across switching manifolds,
including along sliding motion.
\end{remark}

\subsubsection*{Constructal interpretation}

Proposition~\ref{prop:transport_contraction} identifies contraction as a
structural consequence of resistance-driven dynamics: the uniform
curvature of $\mathcal R$ (Assumption~\ref{ass:R_curvature}) together
with positive mobility (Assumption~\ref{ass:resistance_driven}) yields
incremental exponential stability with explicit rate
$\nu$ in \eqref{eq:contraction_rate}. Combined with Constructal
dissipation \eqref{eq:constructal_dissipation}—which confines
$\omega$-limit points to the balance set
$\mathcal E$ in \eqref{eq:balance_set_analysis}—contraction provides the
dynamical mechanism required for \emph{architectural selection}.

\paragraph{Variable mobility.}
If $M$ varies with $x$, uniform contraction can still be established
under additional bounds controlling metric variation
(e.g.\ via the state-dependent metric $W(x)=M(x)^{-1}$).
This case is treated in Section~\ref{sec:model}
under the metric regularity assumption
(Assumption~\ref{ass:metric_regular}).

\paragraph{Physical scope.}
The structural assumptions in Proposition~\ref{prop:transport_contraction}
arise naturally in a broad class of transport systems. In electrical
networks, effective resistance is a convex Dirichlet energy in
conductance variables; analogous convex dissipation functionals occur in
heat conduction networks (Fourier law), hydraulic networks
(Darcy--Weisbach), and diffusive systems derived from linear irreversible
thermodynamics. In such settings architectural parameters enter the
transport laws monotonically, so perturbations away from balanced
configurations increase $\mathcal R$ and yield the curvature bound
\eqref{eq:R_curvature}. This provides a physical foundation for the
dynamical Constructal selection mechanism developed above.

\subsection{Global Incremental Stability}
\label{subsec:global_incremental}

We now lift the differential contraction property established in
Section~\ref{subsec:contraction} to a global stability result for the
architectural dynamics.
While differential contraction guarantees decay of infinitesimal
perturbations, \emph{incremental stability} ensures that any two
admissible architectural trajectories converge toward one another.
This property provides the dynamical mechanism by which Constructal
evolution selects a unique architecture.

\begin{assumption}[Metric regularity]
\label{ass:metric_regular}
There exists a locally Lipschitz matrix-valued function
\[
W:K\rightarrow \mathbb{R}^{n\times n}
\]
that is uniformly positive definite on $K$, i.e.
\begin{equation}
\underline{\lambda} I
\preceq
W(x)
\preceq
\overline{\lambda} I,
\qquad x\in K,
\label{eq:uniform_metric_bounds}
\end{equation}
for constants $0<\underline{\lambda}\le\overline{\lambda}$.

Moreover $W$ admits Clarke directional derivatives along admissible
velocities of the Filippov inclusion \eqref{eq:autonomous_filippov},
and the mapping
\[
(x,v)\mapsto W^{\circ}(x;v)
\]
is bounded on $K\times F(K)$.
\end{assumption}

Under the resistance-driven dynamics of
Section~\ref{subsec:contraction},
Proposition~\ref{prop:transport_contraction}
establishes the differential contraction bound
\begin{equation}
\mu(J(x))\le -\nu,
\qquad x\in K,
\label{eq:weighted_contraction}
\end{equation}
with contraction rate
\begin{equation}
\nu = m\lambda >0
\label{eq:nu_definition}
\end{equation}
determined by the mobility lower bound and curvature of the resistance
functional.

\begin{theorem}[Global incremental exponential stability]
\label{thm:global_contraction}
Suppose Assumptions~\ref{ass:metric_regular}
and~\ref{ass:R_curvature} hold.
Let $x_1(\cdot)$ and $x_2(\cdot)$ be any two absolutely continuous
solutions of the autonomous Filippov inclusion
\eqref{eq:autonomous_filippov} with
$x_1(0),x_2(0)\in K$.
Then there exists a constant $C\ge1$ such that
\begin{equation}
\|x_1(t)-x_2(t)\|
\le
C e^{-\nu t}
\|x_1(0)-x_2(0)\|,
\qquad t\ge0,
\label{eq:incremental_convergence}
\end{equation}
where $\nu$ is defined in \eqref{eq:nu_definition}.
\end{theorem}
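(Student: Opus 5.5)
The argument will be a contraction estimate for the squared distance between two Filippov solutions, in the metric supplied by Assumption~\ref{ass:metric_regular}. Let $x_1(\cdot),x_2(\cdot)$ be solutions, set $e(t):=x_1(t)-x_2(t)$, and let
\[
V(t):=e(t)^\top W e(t).
\]
In the constant-mobility setting of Proposition~\ref{prop:transport_contraction} I would take $W$ constant, for instance $W=I$ or $W=M^{-1}$. The bounds \eqref{eq:uniform_metric_bounds} give
\[
\underline\lambda\|e\|^2\le V\le\overline\lambda\|e\|^2 .
\]
So it suffices to prove $\dot V\le -2\nu V$ for a.e.\ $t$. Gronwall's inequality then yields the estimate with $C=\sqrt{\overline\lambda/\underline\lambda}\ge1$.

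\textbf{Step 1 (differential inequality).} Both $x_i$ are absolutely continuous and remain in $K$ by Property~\ref{prop:forward_invariance}. Hence $V$ is absolutely continuous, and for a.e.\ $t$ we have $\dot V=2e^\top W(v_1-v_2)$ with $v_i\in F(x_i(t))$.

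\textbf{Step 2 (inner product bound).} I want
\[
\langle v_1-v_2,\,x_1-x_2\rangle_W\le -\nu\|x_1-x_2\|_W^2
\quad\text{for all } v_i\in F(x_i).
\]
If $x_1,x_2$ lie in a common smooth regime and the segment between them avoids $\Sigma$, write $v_1-v_2=\int_0^1 J(x_2+s e)\,e\,ds$ by the mean value theorem. Then \eqref{eq:weighted_contraction} bounds each integrand by $-\nu\|e\|^2$.

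\textbf{Step 3 (crossing switching manifolds).} If the segment crosses $\Sigma$, I would use Clarke's mean value theorem for Lipschitz maps, Clarke~\cite{Clarke1983} Prop.~2.6.5. It gives $f(x_1)-f(x_2)\in\operatorname{co}\{A e: A\in\partial_C f([x_2,x_1])\}$. Remark~\ref{rem:filippov_contraction_extension} then ensures every such $A$ satisfies $\mu(A)\le-\nu$, so convexity transfers the bound.

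\textbf{Step 4 (sliding velocities).} Filippov velocities are convex combinations of limits of $f$. For $v_i\in F(x_i)$, I approximate by points $y_i^k\to x_i$ off $\Sigma$ with $f(y_i^k)$ converging to the extreme points of $F(x_i)$. The bilinear inequality is linear in $v_1$ and $v_2$ separately, so it passes to convex hulls and limits.

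\textbf{Main obstacle.} Step 3 requires $f$ to be Lipschitz, equivalently that $\partial_C f$ controls jumps across $\Sigma$. A genuinely discontinuous $f$ has unbounded difference quotients across $\Sigma$, and the mean value inclusion fails. My first attempt would exploit the gradient structure \eqref{eq:resistance_driven}. With $W=M^{-1}$, up to a normal-cone component $\eta_i\in N_K(x_i)$, we can write $v_i=-M(\zeta_i+\eta_i)$ with $\zeta_i=\nabla\mathcal R(x_i)$. Then
\[
\langle v_1-v_2,e\rangle_{M^{-1}}=-\langle\zeta_1-\zeta_2,e\rangle-\langle\eta_1-\eta_2,e\rangle .
\]
Strong convexity \eqref{eq:R_curvature} bounds the first term by $-\lambda\|e\|^2$. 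Monotonicity of $N_K$ for convex $K$ makes the second term nonpositive.

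Passing from $\|e\|^2$ to $\|e\|_W^2$ uses $W=M^{-1}\preceq m^{-1}I$, which together with $\lambda\|e\|^2$ gives the rate $m\lambda=\nu$. This treats switching and sliding uniformly, since the argument never differentiates $f$ across $\Sigma$.
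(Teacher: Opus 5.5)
Your final argument (the one under ``Main obstacle'') takes a genuinely different route from the paper's, and a sounder one, but two gaps remain. The paper keeps the state-dependent metric $W(x_1)$ and asserts, via Clarke's chain rule, that $D^+V\le\delta^\top(WA+A^\top W+\dot W)\delta$ for a single $A\in\mathcal A(x_1(t))$. It then combines the Euclidean bound $\mu(J)\le-\nu$ with boundedness of $\dot W$. That is a variational estimate: a Jacobian at $x_1(t)$ alone does not control $\dot\delta=v_1-v_2$ for a finite separation, and a bounded $\dot W$ degrades the rate rather than preserving it. You instead fix $W=M^{-1}$, differentiate $V$ exactly along both solutions, and bound $v_1-v_2$ through strong monotonicity of $\nabla\mathcal R$. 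That finite-difference property is what incremental stability actually needs. Moreover $M^{-1}$ is the metric in which $m\lambda$ is a valid rate. With $H=\nabla^2\mathcal R(x)$ one has $M^{-1}(-MH)+(-MH)^\top M^{-1}=-2H\preceq-2m\lambda M^{-1}$. In the Euclidean norm, the bound of Proposition~\ref{prop:transport_contraction} can fail for non-scalar $M$: take $M=\operatorname{diag}(1,10)$ and $H$ with rows $(1,1)$, $(1,2)$; then $\det(MH+HM)=-41$, so $\mu(-MH)>0$. Your preliminary Steps~2--4 with $W=I$ rest on that Euclidean bound and should be dropped entirely.

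\emph{First gap.} The decomposition $v_i=-M(\zeta_i+\eta_i)$ with $\eta_i\in N_K(x_i)$ does not follow from \eqref{eq:resistance_driven}. That inclusion only gives $v_i\in-M\nabla\mathcal R(x_i)+T_K(x_i)$. At interior points $T_K(x_i)=\mathbb R^n$, so it says nothing, and for a tangent-cone correction $\tau_i$ the term $\langle\tau_1-\tau_2,e\rangle_{M^{-1}}$ has no sign. Derive the structure from \eqref{eq:regime_gradient_form} instead. Every regime field equals $-M\nabla\mathcal R$, which is continuous because $\mathcal R\in C^2$ near $K$. Its Filippov regularization is therefore the singleton $\{-M\nabla\mathcal R(x)\}$, so $\eta_i=0$, and viability simply says $-M\nabla\mathcal R(x)\in T_K(x)$ on $\partial K$. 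If you want projected dynamics on $\partial K$, the projection must be $M^{-1}$-orthogonal; its Moreau decomposition gives exactly your normal-cone form. The Euclidean projection of Proposition~\ref{prop:dissipation_sufficient} instead gives $v_i=-M\zeta_i-\eta_i$, and then $\langle M^{-1}(\eta_1-\eta_2),e\rangle$ has no sign. This also shows your ``main obstacle'' never arises under these hypotheses. A $C^2$ resistance in gradient form leaves no interior discontinuity, while genuinely discontinuous fields such as \eqref{eq:sign_descent} lie outside both your argument and the paper's.

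\emph{Second gap.} You use convexity of $K$ for monotonicity of $N_K$, but you need it for the first term too. The bound $\langle\nabla\mathcal R(x_1)-\nabla\mathcal R(x_2),e\rangle\ge\lambda\|e\|^2$ comes from integrating $\nabla^2\mathcal R\succeq\lambda I$ along $[x_2,x_1]$, and \eqref{eq:R_curvature} holds only on $K$. Convexity is not a hypothesis of the theorem, and it cannot be dispensed with. Take $n=1$, $M=1$, $\mathcal R(x)=(x^2-1)^2$ and $K=[-1.1,-0.9]\cup[0.9,1.1]$. All assumptions hold: $\mathcal R''\ge5.7$ on $K$, and the flow points inward at all four endpoints. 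Yet $x\equiv1$ and $x\equiv-1$ are solutions at constant distance $2$. The paper's argument tacitly needs the same hypothesis, since lifting a differential bound to an incremental one requires a path from $x_2$ to $x_1$ along which the bound holds. With convexity of $K$ added and $F$ as above, your estimate $\dot V\le-2\lambda\|e\|^2\le-2m\lambda V$ is correct. It yields the theorem with $C=\sqrt{\lambda_{\max}(M)/\lambda_{\min}(M)}$.
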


\begin{proof}

Forward invariance of $K$ (Assumption~\ref{ass:viability})
ensures that $x_i(t)\in K$ for all $t\ge0$.

Define the displacement
\[
\delta(t)=x_1(t)-x_2(t)
\]
and the differential energy
\begin{equation}
V(t)
=
\delta(t)^\top W(x_1(t))\,\delta(t).
\label{eq:V_definition}
\end{equation}

Uniform positive definiteness of the metric
\eqref{eq:uniform_metric_bounds}
implies the norm equivalence
\begin{equation}
\underline{\lambda}\|\delta(t)\|^2
\le
V(t)
\le
\overline{\lambda}\|\delta(t)\|^2 .
\label{eq:energy_equivalence}
\end{equation}

Using Clarke’s chain rule for locally Lipschitz functions along the
Filippov inclusion \eqref{eq:autonomous_filippov},
for almost every $t$ there exist
$v_1(t)\in F(x_1(t))$
and a matrix
$A\in\mathcal A(x_1(t))$
such that
\begin{equation}
D^+V(t)
\le
\delta^\top
\Big(
W(x_1(t))A
+
A^\top W(x_1(t))
+
\dot W(x_1(t);v_1(t))
\Big)
\delta .
\label{eq:V_derivative}
\end{equation}

By Assumption~\ref{ass:metric_regular},
the metric variation term $\dot W(x;v)$ is bounded on
$K\times F(K)$.
Combining this bound with the contraction inequality
\eqref{eq:weighted_contraction}
yields
\begin{equation}
D^+V(t)\le -2\nu V(t)
\quad \text{for a.e. } t\ge0 .
\label{eq:Dini_decay}
\end{equation}

Applying Grönwall’s inequality gives
\begin{equation}
V(t)\le V(0)e^{-2\nu t}.
\label{eq:V_decay}
\end{equation}

Using the norm equivalence \eqref{eq:energy_equivalence} yields
\[
\|\delta(t)\|
\le
\sqrt{\frac{\overline{\lambda}}{\underline{\lambda}}}
\,e^{-\nu t}
\|\delta(0)\|.
\]

Defining
\[
C=\sqrt{\frac{\overline{\lambda}}{\underline{\lambda}}}
\]
establishes \eqref{eq:incremental_convergence}.
\end{proof}

\medskip

Theorem~\ref{thm:global_contraction} shows that the autonomous
Constructal dynamics is incrementally exponentially stable on $K$:
any two admissible architectural trajectories converge toward one
another at rate $\nu$.
Consequently all $\omega$-limit sets coincide. Combined with the resistance dissipation principle
\eqref{eq:constructal_dissipation},
which confines limit points to the balance set
\eqref{eq:balance_set_analysis},
incremental contraction eliminates multiplicity within
$\mathcal E$.
The architectural dynamics therefore selects a
\emph{unique globally attracting Constructal configuration}.

\subsection{Constructal Selection Theorem}
\label{subsec:constructal_selection}

We now combine the structural ingredients developed in the previous
sections to obtain the central dynamical selection result.
Persistence ensures that architectural trajectories remain inside the
finite admissible configuration space.
Constructal dissipation drives the system toward lower resistance.
Uniform incremental contraction collapses all admissible trajectories
toward one another.
Together these mechanisms imply that resistance-driven transport
systems select a unique stable architecture.

\begin{theorem}[Constructal architecture selection principle]
\label{thm:constructal_selection}
Consider the autonomous Filippov inclusion
\begin{equation}
\dot x(t)\in F(x(t)),
\qquad x(0)\in K,
\label{eq:autonomous_filippov}
\end{equation}
describing architectural evolution on the compact admissible
configuration set $K$.

Suppose the following conditions hold:

\begin{enumerate}[label=(\roman*),leftmargin=2.2em]

\item \textbf{Persistence.}
      Assumptions~\ref{ass:filippov_regularity} and
      \ref{ass:viability} hold, so the set $K$ is forward invariant
      and solutions of \eqref{eq:autonomous_filippov} are forward
      complete.

\item \textbf{Constructal dissipation.}
      Assumption~\ref{ass:dissipation} holds for the resistance
      functional $\mathcal R$, i.e.
      \begin{equation}
      \sup_{v\in F(x)}
      \mathcal R^\circ(x;v)
      \le -\alpha\Psi(x),
      \qquad x\in K.
      \label{eq:dissipation_theorem}
      \end{equation}

\item \textbf{Incremental contraction.}
      The contraction condition
      \eqref{eq:weighted_contraction} holds with rate
      \begin{equation}
      \nu>0 .
      \label{eq:nu_selection}
      \end{equation}

\end{enumerate}

Then the following statements hold.

\begin{enumerate}[label=(\alph*),leftmargin=2.2em]

\item \textbf{Existence and uniqueness of equilibrium.}
      There exists a unique configuration $x^\ast\in K$ such that
      \begin{equation}
      0\in F(x^\ast).
      \label{eq:constructal_equilibrium}
      \end{equation}

\item \textbf{Global exponential convergence.}
      Every solution of \eqref{eq:autonomous_filippov} satisfies
      \begin{equation}
      \|x(t)-x^\ast\|
      \le
      C e^{-\nu t}
      \|x(0)-x^\ast\|,
      \qquad t\ge0,
      \label{eq:global_exponential_convergence}
      \end{equation}
      where the constant
      \begin{equation}
      C=\sqrt{\frac{\overline{\lambda}}{\underline{\lambda}}}
      \label{eq:C_definition}
      \end{equation}
      depends only on the metric bounds
      \eqref{eq:uniform_metric_bounds}.

\item \textbf{Constructal balance.}
      The equilibrium satisfies
      \begin{equation}
      x^\ast\in\mathcal E,
      \qquad
      \mathcal E :=
      \{x\in K:\Psi(x)=0\},
      \label{eq:constructal_balance}
      \end{equation}
      which coincides with the stationary balance set defined in
      \eqref{eq:balance_set_analysis}.

\end{enumerate}

The configuration $x^\ast$ is called the
\emph{Constructal architecture}.
\end{theorem}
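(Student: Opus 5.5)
The argument rests on the incremental bound of Theorem~\ref{thm:global_contraction}, which gives
\[
\|x_1(t)-x_2(t)\|\le Ce^{-\nu t}\|x_1(0)-x_2(0)\|, \qquad C=\sqrt{\overline{\lambda}/\underline{\lambda}},
\]
for any two admissible solutions. Persistence (i) supplies it: every solution exists on $[0,\infty)$ and stays in $K$. Existence of an equilibrium will come from a fixed-point argument on the time-$T$ reachable map. Uniqueness and part (b) then follow by specializing the incremental bound to a stationary solution. Part (c) follows from the dissipation inequality and the invariance principle \eqref{eq:omega_subset_E}.

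\textbf{Step 1 (existence).} Fix a solution $x(\cdot)$ starting in $K$ and $T>0$ with $Ce^{-\nu T}<1$. Let $y(\cdot)$ be any solution started at $x(T)$. By autonomy, $t\mapsto x(t+T)$ is also a solution from $x(T)$, so the incremental bound gives
\[
\|x(t+T)-x(t)\|\le Ce^{-\nu t}\|x(T)-x(0)\|\le Ce^{-\nu t}\operatorname{diam}K .
\]
Summing this over $t,t+T,t+2T,\dots$ shows that $(x(kT+s))_k$ is Cauchy for each $s$. A cleaner route is to bound $\|\dot x\|$ via local boundedness of $F$ and integrate: $\int_t^\infty\|\dot x\|$ need not be finite directly, but the previous estimate gives $\|x(t+h)-x(t)\|\le Ce^{-\nu t}\operatorname{diam}K$ for every $h\in[0,T]$, and summing over dyadic blocks yields a Cauchy criterion. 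Hence $x(t)\to x^\ast\in K$, using compactness and closedness of $K$.

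To show $0\in F(x^\ast)$, I will use upper semicontinuity with convex compact values. For large $t$, $\dot x(s)\in F(x(s))\subset F(x^\ast)+\varepsilon B$ for $s\ge t$. Then $(x(t+1)-x(t))=\int_t^{t+1}\dot x\in F(x^\ast)+\varepsilon B$ by convexity. The left side tends to $0$, so $0\in F(x^\ast)$. An alternative, if needed, is that the $\omega$-limit set is a weakly invariant singleton.

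\textbf{Step 2 (uniqueness and (b)).} If $0\in F(x^\ast)$, the constant function $x_2(t)\equiv x^\ast$ is an absolutely continuous solution. Applying Theorem~\ref{thm:global_contraction} with $x_1=x(\cdot)$ and $x_2\equiv x^\ast$ gives \eqref{eq:global_exponential_convergence} with the stated $C$. If $\tilde x$ is a second equilibrium, the same bound applied to the two constant solutions gives $\|\tilde x-x^\ast\|\le Ce^{-\nu t}\|\tilde x-x^\ast\|\to0$, so $\tilde x=x^\ast$.

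\textbf{Step 3 ((c)).} Take the constant trajectory at $x^\ast$. Its $\omega$-limit set is $\{x^\ast\}$, and by \eqref{eq:omega_subset_E} this lies in $\mathcal E$. More directly, choosing $v=0\in F(x^\ast)$ in \eqref{eq:dissipation_theorem} gives $0=\mathcal R^\circ(x^\ast;0)\le-\alpha\Psi(x^\ast)$. Since $\Psi\ge0$, this forces $\Psi(x^\ast)=0$. The direct argument is cleaner and avoids LaSalle.

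\textbf{Main obstacle.} I expect the obstacle to be Step 1. The incremental estimate controls only pairs of solutions, not a single trajectory's displacement over long times, and solutions of the inclusion may be nonunique. Both issues are handled by using only the fact that time-shifts of solutions are solutions, and by needing the estimate only for the particular pair constructed. If the Cauchy argument gets messy, I would fall back on Kakutani's theorem. In that version one applies a fixed-point argument to the set-valued time-$T$ reachable map on the compact convex hull: its values are compact but possibly nonconvex, which is the weak point. This is why I prefer the Cauchy route.
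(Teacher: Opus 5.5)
Your proof is correct. It shares the paper's skeleton: contraction gives convergence to a point, the limit is an equilibrium, uniqueness comes from constant solutions, and balance comes from dissipation. Several sub-arguments are genuinely different, though, and in two places yours are tighter.

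\begin{itemize}
\item \textbf{Convergence of a single trajectory.} The paper's Step~2 jumps from ``any two solutions approach each other'' to ``every $\omega$-limit set is a singleton'' without justification. Your time-shift argument supplies exactly the missing link. It also shows that autonomy is what is being used: a periodically forced contracting system satisfies the same pairwise estimate, yet its trajectories converge to a periodic orbit, not a point.

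\item \textbf{A simplification of that step.} Applying Theorem~\ref{thm:global_contraction} to $x(\cdot+h)$ and $x(\cdot)$ gives $\|x(t+h)-x(t)\|\le Ce^{-\nu t}\operatorname{diam}K$ for \emph{every} $h\ge 0$. This is already the Cauchy criterion. So the choice $Ce^{-\nu T}<1$, the restriction $h\in[0,T]$ and the block summation are unnecessary, and so is the Kakutani fallback.

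\item \textbf{The equilibrium property $0\in F(x^\ast)$.} The paper invokes weak invariance of limit sets. Your averaging argument uses upper semicontinuity, compactness and convexity of $F(x^\ast)+\varepsilon\overline{B}$. It is self-contained and needs only Assumption~\ref{ass:filippov_regularity}.

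\item \textbf{Part (b).} The paper's proof never writes (b) out. Your application of the incremental bound with $x_2\equiv x^\ast$ supplies it with the stated constant.

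\item \textbf{Part (c).} The paper argues by contradiction along the convergent trajectory. It integrates the dissipation inequality via Clarke's chain rule and uses that $\mathcal R$ is bounded below on $K$. Your pointwise argument is different: $0\in F(x^\ast)$ and $\mathcal R^\circ(x^\ast;0)=0$ force $0\le-\alpha\Psi(x^\ast)$. It avoids the chain rule entirely and proves more, namely that every zero of $F$ lies in $\mathcal E$ independently of contraction.
\end{itemize}

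Both proofs rest on the same contraction input: they take Theorem~\ref{thm:global_contraction}, including the metric bounds that define $C$, as a black box.
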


\begin{proof}

\textbf{Step 1: Existence of limit points.}

Because $K$ is compact and forward invariant,
every trajectory of \eqref{eq:autonomous_filippov}
remains in $K$.
Hence every solution admits a nonempty compact
$\omega$-limit set.

\textbf{Step 2: Collapse of trajectories.}

By Theorem~\ref{thm:global_contraction},
any two solutions satisfy the incremental estimate
\eqref{eq:incremental_convergence}.
Consequently all trajectories converge toward one another.
Therefore the $\omega$-limit set of every trajectory must be a
singleton.
Denote this point by $x^\ast$.
Thus
\[
x(t)\to x^\ast
\qquad \text{as } t\to\infty .
\]

\textbf{Step 3: Equilibrium property.}

For autonomous differential inclusions,
limit points of solutions are weakly invariant.
Therefore the singleton limit set satisfies
\[
0\in F(x^\ast),
\]
so $x^\ast$ is an equilibrium of the Filippov inclusion.

\textbf{Step 4: Uniqueness.}

If $y^\ast$ were another equilibrium,
consider the constant solutions
$x(t)\equiv x^\ast$ and $y(t)\equiv y^\ast$.
Applying the contraction estimate
\eqref{eq:incremental_convergence}
gives
\[
\|x^\ast-y^\ast\|
\le
C e^{-\nu t}\|x^\ast-y^\ast\|.
\]
Letting $t\to\infty$ yields $x^\ast=y^\ast$.
Hence the equilibrium is unique.

\textbf{Step 5: Balance condition.}

By the dissipation inequality
\eqref{eq:dissipation_theorem},
\[
\frac{d}{dt}\mathcal R(x(t))
\le -\alpha\Psi(x(t))
\quad \text{for a.e. } t .
\]
Suppose $\Psi(x^\ast)>0$.
Since $\Psi$ is continuous and $x(t)\to x^\ast$,
there exists $\varepsilon>0$ and $T>0$ such that
$\Psi(x(t))\ge\varepsilon$ for $t\ge T$.
Then
\[
\frac{d}{dt}\mathcal R(x(t))
\le -\alpha\varepsilon
\quad \text{for } t\ge T,
\]
which implies $\mathcal R(x(t))\to -\infty$ as $t\to\infty$.
This contradicts boundedness of $\mathcal R$ on the compact set $K$.
Therefore $\Psi(x^\ast)=0$ and $x^\ast\in\mathcal E$.

\end{proof}

Theorem~\ref{thm:constructal_selection}
provides a rigorous dynamical formulation of Constructal selection.
Resistance dissipation drives trajectories toward the stationary
balance set, while incremental contraction collapses this set to a
single globally attracting equilibrium.
The resulting configuration $x^\ast$ is therefore the uniquely selected
Constructal architecture emerging from resistance-driven transport
dynamics on the finite admissible configuration space.

\subsection{Weighted Metric and Differential Contraction}
\label{subsec:contraction_framework}

We consider the autonomous Filippov inclusion
\begin{equation}
\dot x(t)\in F(x(t)),
\qquad x(0)\in K,
\label{eq:autonomous_filippov}
\end{equation}
defined on the compact forward-invariant set $K$, where $F$ is the
Filippov regularization of a piecewise-$C^1$ adjustment field
$f$ introduced in Section~\ref{sec:model}.
Let $x(\cdot)$ and $y(\cdot)$ denote two absolutely continuous
solutions of \eqref{eq:autonomous_filippov}.
Architectural selection is quantified through the incremental
convergence of trajectories measured in a state-dependent metric.

In transport systems the sensitivity of global resistance to
architectural perturbations typically varies across regimes.
Regions of strong transport curvature penalize deviations more
strongly than regions where resistance gradients are weak.
To capture this heterogeneity we introduce a conformally
weighted metric whose scaling depends on the architectural
state.

\paragraph{Conformal weighted distance.}

Let $W:K\to\mathbb R$ be a locally Lipschitz scalar field.
Define the conformal norm
\begin{equation}
\|\delta\|_{W(x)}
:=
e^{W(x)}\|\delta\|,
\label{eq:conformal_norm}
\end{equation}
where $\|\cdot\|$ denotes the Euclidean norm.
For two trajectories $x(\cdot)$ and $y(\cdot)$ we define the
weighted separation
\begin{equation}
D(t)
:=
e^{W(x(t))}\|x(t)-y(t)\|.
\label{eq:weighted_distance}
\end{equation}

The conformal scaling allows the contraction rate to depend on
the architectural configuration while avoiding the additional
complexity of a full matrix-valued metric.

\begin{assumption}[Conformal weight regularity]
\label{ass:weight_regularity}
The function $W$ is locally Lipschitz on each regime of the
piecewise-$C^1$ field $f$ and continuous on the compact set $K$.
\end{assumption}

Under Assumption~\ref{ass:weight_regularity},
the mapping $x\mapsto e^{W(x)}$ is continuous on $K$,
so the distance \eqref{eq:weighted_distance} remains well
defined across switching surfaces.

\paragraph{Admissible linearizations.}

Let $f$ denote the underlying piecewise-$C^1$ vector field whose
Filippov regularization generates $F$.
The admissible infinitesimal linearizations are described by the
Clarke generalized Jacobian
\begin{equation}
\mathcal A(x):=\partial_C f(x),
\label{eq:generalized_jacobian}
\end{equation}
which is nonempty, compact, and convex for every $x\in K$.
This set contains all regime-wise Jacobians arising from
one-sided limits of the smooth vector fields. Since $W$ is locally Lipschitz, its variation along an admissible
velocity $v\in F(x)$ is measured through the Clarke directional
derivative
\begin{equation}
W^\circ(x;v)
=
\max_{\zeta\in\partial_C W(x)}
\langle\zeta,v\rangle .
\label{eq:W_clarke}
\end{equation}

\paragraph{Differential contraction condition.}

Let $\mu(A)$ denote the Euclidean matrix measure
(logarithmic norm)
\begin{equation}
\mu(A)
:=
\lambda_{\max}\!\left(\frac{A+A^\top}{2}\right).
\label{eq:matrix_measure}
\end{equation}

We say that the autonomous Filippov system
\eqref{eq:autonomous_filippov}
is \emph{uniformly contracting} in the conformal metric
\eqref{eq:conformal_norm} if there exists a constant
$\lambda>0$ such that
\begin{equation}
\sup_{\substack{v\in F(x)\\A\in\mathcal A(x)}}
\big(
\mu(A)+W^\circ(x;v)
\big)
\le -\lambda,
\qquad x\in K .
\label{eq:contraction_condition}
\end{equation}

The term $\mu(A)$ bounds the instantaneous expansion of
infinitesimal displacements under admissible regime
linearizations, while $W^\circ(x;v)$ captures the additional
variation induced by the state-dependent conformal scaling.

\paragraph{Constructal interpretation.}

Condition \eqref{eq:contraction_condition}
ensures exponential decay of the weighted separation
\eqref{eq:weighted_distance}, including across regime
transitions and sliding motion.
Hence any two admissible architectural trajectories converge
exponentially toward one another.

Constructal dissipation
(Assumption~\ref{ass:dissipation})
drives trajectories toward the balance set
\begin{equation}
\mathcal E :=
\{x\in K:\Psi(x)=0\},
\label{eq:balance_set_contraction}
\end{equation}
which characterizes configurations where resistance
improvement ceases.
Uniform contraction then eliminates multiplicity inside
$\mathcal E$ by forcing all admissible trajectories to
collapse onto a single invariant configuration. Thus dissipation and contraction act as complementary
mechanisms in dynamical Constructal evolution:
dissipation determines the thermodynamic direction of
architectural change, while contraction enforces uniqueness
and stability of the selected architecture.
Global incremental exponential convergence is established in
Section~\ref{subsec:global_incremental}.

\subsubsection{Structural Sufficiency for Uniform Contraction}
\label{subsubsec:structural_contraction}

We provide a structural condition under which uniform contraction holds
for gradient-type architectural dynamics.
Throughout this subsection, $\mu(\cdot)$ denotes the Euclidean matrix
measure (logarithmic norm)
\begin{equation}
\mu(A)
:=
\lambda_{\max}\!\left(\frac{A+A^\top}{2}\right).
\label{eq:matrix_measure_structural}
\end{equation}

For a smooth regime field $f_j$, \emph{uniform contraction} on $K$ with
rate $\nu>0$ means
\begin{equation}
\mu\!\big(\nabla f_j(x)\big)\le -\nu
\qquad \text{for all } x\in K\cap\Omega_j .
\label{eq:euclidean_contraction_regime}
\end{equation}
Since $\mu$ is convex, the bound \eqref{eq:euclidean_contraction_regime}
extends to the Filippov convexification across switching surfaces.

\begin{proposition}[Gradient-type structural sufficiency]
\label{prop:contraction_sufficient}
Let $\mathcal N\subset\mathbb R^n$ be open and let
$\mathcal R\in C^2(\mathcal N)$.
Assume that on $\mathcal N$ the regime field admits the gradient-type
representation
\begin{equation}
f_j(x) = -\,M_j(x)\,\nabla \mathcal R(x),
\label{eq:gradient_type_dynamics}
\end{equation}
where $M_j\in C^1(\mathcal N;\mathbb R^{n\times n})$ is symmetric.

Suppose there exist constants $m>0$, $L_M\ge0$, and $\lambda>0$ such that
for all $x\in\mathcal N$:
\begin{enumerate}[label=(S\arabic*),leftmargin=2.6em]
\item \emph{Uniform positive mobility:}
\begin{equation}
v^\top M_j(x)v \ge m\|v\|^2
\qquad \text{for all } v\in\mathbb R^n;
\label{eq:mobility_lower_bound_structural}
\end{equation}
\item \emph{Bounded mobility variation:}
\begin{equation}
\|D M_j(x)\|\le L_M;
\label{eq:mobility_lipschitz_structural}
\end{equation}
\item \emph{Uniform curvature of resistance:}
\begin{equation}
\nabla^2\mathcal R(x)\succeq \lambda I.
\label{eq:R_curvature_structural}
\end{equation}
\end{enumerate}
Then for all $x\in\mathcal N$,
\begin{equation}
\mu\!\big(\nabla f_j(x)\big)
\le
-\,m\lambda
+
\frac{L_M}{2}\,\|\nabla\mathcal R(x)\|.
\label{eq:mu_bound_general}
\end{equation}

Consequently, on the set
\begin{equation}
\mathcal N_\varepsilon
:=
\{x\in\mathcal N:\|\nabla\mathcal R(x)\|\le\varepsilon\},
\label{eq:N_epsilon}
\end{equation}
uniform contraction holds with rate
\begin{equation}
\nu
=
m\lambda
-
\frac{L_M}{2}\varepsilon,
\label{eq:local_contraction_rate}
\end{equation}
provided $\varepsilon<2m\lambda/L_M$ (with the convention that the
condition is vacuous if $L_M=0$). In particular, if $M_j$ is constant on
$\mathcal N$, then
\begin{equation}
\mu\!\big(\nabla f_j(x)\big)\le -m\lambda
\qquad \text{for all } x\in\mathcal N.
\label{eq:constant_M_contraction}
\end{equation}
\end{proposition}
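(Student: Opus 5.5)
The plan is a direct computation of the Jacobian of the regime field followed by the subadditivity of the matrix measure, splitting $\nabla f_j$ into a curvature part and a mobility-variation part. Fix $j$ and $x\in\mathcal N$. Differentiating \eqref{eq:gradient_type_dynamics} by the product rule gives
\[
\nabla f_j(x) = -M_j(x)\nabla^2\mathcal R(x) - B_j(x),
\qquad
B_j(x)v := \big(DM_j(x)[v]\big)\nabla\mathcal R(x),\quad v\in\mathbb R^n .
\]
Since $\mu(A_1+A_2)\le\mu(A_1)+\mu(A_2)$ and $\mu(A)\le\|A\|$, I will bound
\[
\mu(\nabla f_j(x))\le \mu\big(-M_j(x)\nabla^2\mathcal R(x)\big)+\mu(-B_j(x))
\]
and estimate the two terms separately.

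For the curvature term I will follow the argument of Proposition~\ref{prop:transport_contraction}. The symmetric part is $-\tfrac12(M_jH+HM_j)$ with $H=\nabla^2\mathcal R(x)\succeq\lambda I$ and $M_j\succeq mI$ by (S1) and (S3). The aim is to show that this is $\preceq -m\lambda I$, which yields $\mu(-M_jH)\le -m\lambda$.

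For the variation term, (S2) gives $\|B_j(x)v\|\le\|DM_j(x)\|\,\|v\|\,\|\nabla\mathcal R(x)\|$, so $\|B_j(x)\|\le L_M\|\nabla\mathcal R(x)\|$. Symmetrizing, $\mu(-B_j)\le\|\tfrac12(B_j+B_j^\top)\|$. To obtain the stated factor $\tfrac12$ I will read $\|DM_j\|$ in (S2) as the norm that controls the symmetrized contribution $\tfrac12(B_j+B_j^\top)$, or equivalently absorb the constant into $L_M$. If only the crude bound is available, the constant becomes $L_M$ rather than $L_M/2$, and the rest of the argument is unchanged.

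Adding the two bounds gives \eqref{eq:mu_bound_general}. The remaining claims then follow immediately:
\begin{itemize}
\item on $\mathcal N_\varepsilon$ we have $\|\nabla\mathcal R\|\le\varepsilon$, which yields the rate \eqref{eq:local_contraction_rate};
\item this rate is positive exactly when $\varepsilon<2m\lambda/L_M$;
\item if $M_j$ is constant, then $B_j\equiv0$, which gives \eqref{eq:constant_M_contraction}.
\end{itemize}

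The main obstacle is the curvature step. For general symmetric positive definite $M_j$ and $H$, the inequality $\tfrac12(M_jH+HM_j)\succeq m\lambda I$ is false: the symmetrized product of two positive definite matrices can be indefinite when they do not commute and are ill-conditioned. So the Euclidean matrix measure cannot be bounded this way without more structure.

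I would first try to close this gap in either of two ways:
\begin{itemize}
\item add the assumption that $M_j(x)$ and $\nabla^2\mathcal R(x)$ commute (for example, $M_j$ scalar or simultaneously diagonalizable), in which case $M_jH$ is symmetric with eigenvalues $\ge m\lambda$;
\item work in the metric $W=M_j^{-1}$, where $M_j^{-1/2}(M_jH)M_j^{1/2}=M_j^{1/2}HM_j^{1/2}\succeq m\lambda I$; this gives the contraction rate $m\lambda$ in the weighted norm, at the cost of the condition-number constant $C$ of Theorem~\ref{thm:global_contraction}.
\end{itemize}
I would present the proof under one of these provisos and flag it explicitly, since the same issue affects Proposition~\ref{prop:transport_contraction}.
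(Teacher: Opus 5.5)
Your plan follows the paper's proof step for step: product rule, subadditivity of $\mu$, then separate bounds for a curvature term and a mobility-variation term. The obstruction you flag is genuine, and it is exactly where the paper's own proof fails. The paper asserts $v^\top(MH+HM)v=2\,v^\top MHv\ge 2m\lambda\|v\|^2$. The equality is fine, but the inequality does not follow from $M\succeq mI$ and $H\succeq\lambda I$ when $M$ and $H$ do not commute. In fact the proposition is false as stated, even for constant mobility. Take $\mathcal R(x)=\tfrac12x^\top Hx$ on $\mathcal N=\mathbb R^2$ with
\[
M=\begin{pmatrix}1&0\\0&10\end{pmatrix},\qquad
H=\begin{pmatrix}1&0.9\\0.9&1\end{pmatrix},\qquad
MH+HM=\begin{pmatrix}2&9.9\\9.9&20\end{pmatrix}.
\]
Then $m=1$ and $\lambda=0.1$, but $\det(MH+HM)=40-98.01<0$. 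Hence $\mu(-MH)\approx1.19>0$, contradicting \eqref{eq:constant_M_contraction}. The same step underlies Proposition~\ref{prop:transport_contraction}.

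Your two repairs behave as follows:
\begin{itemize}
\item The commutation proviso (e.g.\ scalar mobility $M_j=\sigma(x)I$) repairs the curvature bound exactly.
\item The metric repair $W=M^{-1}$ is correct for a constant mobility shared by all regimes. If $M_j$ varies with $x$, you pick up a $\dot W$ term that requires an upper bound on $M_j$. If it varies with $j$, the regime-wise bounds live in different metrics, so the convexity argument of Remark~\ref{rem:filippov_contraction_extension} no longer carries them across switching surfaces.
\end{itemize}

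Your doubt about the factor $\tfrac12$ is also justified, but resolve it by changing the constant, not by reinterpreting the norm. The paper's own chain $\lambda_{\max}\bigl(-\tfrac{B+B^\top}{2}\bigr)\le\|B\|\le L_M\|\nabla\mathcal R(x)\|$ yields $L_M$, not $L_M/2$. The factor $\tfrac12$ is false even in one dimension, where there is no norm ambiguity and everything commutes. On $\mathcal N=(0.9,1.1)$ take $\mathcal R(x)=x^2/2$ and $M(x)=2.1-x$. Then $m=\lambda=L_M=1$ and $f_j'(x)=2x-2.1>0$ for $x>1.05$. Yet \eqref{eq:mu_bound_general} claims $f_j'(x)\le -1+x/2<0$, with $\mathcal N_{1.1}=\mathcal N$ and $1.1<2m\lambda/L_M$.

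So state the result as $\mu(\nabla f_j(x))\le -m\lambda+L_M\|\nabla\mathcal R(x)\|$, with rate $m\lambda-L_M\varepsilon$ for $\varepsilon<m\lambda/L_M$. Your mobility term $B_jv=(DM_j(x)[v])\nabla\mathcal R(x)$ is the correct one. The paper's $DM_j(x)[\nabla\mathcal R(x)]$, the derivative of $M_j$ in the direction $\nabla\mathcal R$, is a different matrix, though it obeys the same norm bound. With the commutation proviso and the constant $L_M$, your argument is a complete proof of the amended proposition.
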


\begin{proof}
Differentiate \eqref{eq:gradient_type_dynamics} to obtain
\begin{equation}
\nabla f_j(x)
=
-\,D M_j(x)\big[\nabla\mathcal R(x)\big]
-\,
M_j(x)\,\nabla^2\mathcal R(x),
\label{eq:jacobian_gradient_type}
\end{equation}
where $D M_j(x)[\cdot]$ denotes the Fr\'echet derivative of $M_j$ applied
to a direction vector.

Let $M:=M_j(x)$ and $H:=\nabla^2\mathcal R(x)$, and set
\[
B:=D M_j(x)\big[\nabla\mathcal R(x)\big].
\]
Since $\mu(A)=\lambda_{\max}\!\left(\frac{A+A^\top}{2}\right)$, we have
\[
\mu\!\big(\nabla f_j(x)\big)
=
\lambda_{\max}\!\left(
-\frac{B+B^\top}{2}
-\frac{MH+HM}{2}
\right)
\le
\lambda_{\max}\!\left(-\frac{MH+HM}{2}\right)
+
\lambda_{\max}\!\left(-\frac{B+B^\top}{2}\right).
\]

\emph{Curvature term.}
Using \eqref{eq:mobility_lower_bound_structural} and
\eqref{eq:R_curvature_structural}, for all $v\neq 0$,
\[
v^\top(MH+HM)v
=
2\,v^\top MHv
\ge
2m\lambda \|v\|^2,
\]
hence
\[
\lambda_{\max}\!\left(-\frac{MH+HM}{2}\right)\le -m\lambda.
\]

\emph{Mobility-variation term.}
By \eqref{eq:mobility_lipschitz_structural},
\[
\|B\|
=
\big\|D M_j(x)\big[\nabla\mathcal R(x)\big]\big\|
\le
\|D M_j(x)\|\,\|\nabla\mathcal R(x)\|
\le
L_M\,\|\nabla\mathcal R(x)\|.
\]
Since $\lambda_{\max}\!\left(-\frac{B+B^\top}{2}\right)\le \frac12\|B+B^\top\|
\le \|B\|$, we obtain
\[
\lambda_{\max}\!\left(-\frac{B+B^\top}{2}\right)
\le
\frac{L_M}{2}\,\|\nabla\mathcal R(x)\|.
\]

Combining the two bounds yields \eqref{eq:mu_bound_general}.
The local contraction rate \eqref{eq:local_contraction_rate} follows
immediately on $\mathcal N_\varepsilon$. If $L_M=0$ (i.e.\ $M_j$ is
constant), then \eqref{eq:constant_M_contraction} holds.
\end{proof}

\begin{remark}[Structural interpretation]
\label{rem:structural_interpretation_contraction}
The product $m\lambda$ couples mobility and resistance curvature.
Uniform strong convexity of $\mathcal R$ together with strictly positive
mobility enforces contraction; variability of the mobility degrades the
rate by an amount proportional to $\|\nabla\mathcal R\|$.
In particular, if $M_j$ is constant, contraction is uniform on
$\mathcal N$ with rate $m\lambda$.
\end{remark}

\subsection{Stabilization of Access}
\label{subsec:stabilization_access}

We relate Constructal dissipation and incremental contraction to the
asymptotic behavior of the global access functional
$\mathcal R:K\to\mathbb R_{\ge0}$.
Under Assumptions~\ref{ass:finite_architecture},
\ref{ass:dissipation}, and~\ref{ass:R_curvature},
Theorem~\ref{thm:constructal_selection} yields a unique globally attracting
equilibrium $x^\ast\in K$ for the autonomous Filippov inclusion
\begin{equation}
\dot x(t)\in F(x(t)),\qquad x(0)\in K,
\label{eq:autonomous_filippov_invariant}
\end{equation}
satisfying
\[
0\in F(x^\ast),
\qquad
\lim_{t\to\infty}x(t)=x^\ast
\quad \text{for every } x(0)\in K .
\]
We now show that the resistance functional stabilizes accordingly.

\begin{proposition}[Asymptotic stabilization of resistance]
\label{prop:resistance_convergence}
Suppose Assumptions~\ref{ass:finite_architecture},
\ref{ass:dissipation}, and~\ref{ass:R_curvature} hold.
Let $x(\cdot)$ be any Filippov solution of
\eqref{eq:autonomous_filippov_invariant} with $x(0)\in K$.
Then:
\begin{enumerate}[label=(\roman*),leftmargin=2.2em]
\item the mapping $t\mapsto \mathcal R(x(t))$ is nonincreasing on $[0,\infty)$
      and bounded below;
\item the limit $\lim_{t\to\infty}\mathcal R(x(t))$ exists and equals $\mathcal R(x^\ast)$;
\item if the balance condition
      \begin{equation}
      \Psi(x)=0
      \quad \Longleftrightarrow \quad
      0\in \partial_C \mathcal R(x)+N_K(x)
      \label{eq:Psi_stationarity_equiv}
      \end{equation}
      holds and $\mathcal R$ is convex on $K$, then $x^\ast$ minimizes $\mathcal R$ over $K$;
      if $\mathcal R$ is strictly convex on $K$, this minimizer is unique.
\end{enumerate}
\end{proposition}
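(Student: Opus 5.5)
We prove the three items in order, relying on the monotone decay estimate \eqref{eq:R_decay} and the convergence $x(t)\to x^\ast$ supplied by Theorem~\ref{thm:constructal_selection}.

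For (i), note that $x(\cdot)$ is absolutely continuous and remains in $K$ by forward invariance. Since $\mathcal R$ is locally Lipschitz on the compact set $K$, it is globally Lipschitz there. Hence $t\mapsto\mathcal R(x(t))$ is absolutely continuous, and Clarke's chain rule \eqref{eq:clarke_chain} together with Assumption~\ref{ass:dissipation} gives \eqref{eq:R_decay}. Integrating,
\[
\mathcal R(x(t))-\mathcal R(x(s))\le-\alpha\int_s^t\Psi(x(r))\,dr\le 0
\qquad (s\le t),
\]
so the map is nonincreasing. It is bounded below because $\mathcal R\ge 0$. Assumption~\ref{ass:R_curvature} already gives $\mathcal R\in C^2$ near $K$, so the chain rule can in fact be applied classically.

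For (ii), a nonincreasing function bounded below has a limit. To identify that limit, we use $x(t)\to x^\ast$ from Theorem~\ref{thm:constructal_selection}(b) and continuity of $\mathcal R$, which give $\mathcal R(x(t))\to\mathcal R(x^\ast)$. As a by-product, $\mathcal R(x^\ast)\le\mathcal R(x(0))$ for every initial condition in $K$.

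For (iii), Theorem~\ref{thm:constructal_selection}(c) gives $\Psi(x^\ast)=0$. Under the equivalence \eqref{eq:Psi_stationarity_equiv} this becomes $0\in\partial_C\mathcal R(x^\ast)+N_K(x^\ast)$.
\begin{itemize}
\item When $\mathcal R$ is convex, the Clarke subdifferential coincides with the convex subdifferential. We therefore obtain $\zeta\in\partial\mathcal R(x^\ast)$ with $-\zeta\in N_K(x^\ast)$.
\item For any $y\in K$, the subgradient inequality gives $\mathcal R(y)\ge\mathcal R(x^\ast)+\langle\zeta,y-x^\ast\rangle$.
\item If $K$ is convex, $y-x^\ast\in T_K(x^\ast)$, and the normal-cone condition gives $\langle -\zeta,y-x^\ast\rangle\le 0$. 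Hence $\mathcal R(y)\ge\mathcal R(x^\ast)$, so $x^\ast$ minimizes $\mathcal R$ over $K$.
\item Uniqueness under strict convexity follows from the standard argument: if two minimizers existed, their midpoint would lie in $K$ (by convexity of $K$) and would have strictly smaller value, a contradiction.
\end{itemize}

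The main obstacle is item (iii), because the statement does not assume $K$ convex. Convexity of $K$ is needed both to make ``$\mathcal R$ convex on $K$'' meaningful and to ensure $y-x^\ast\in T_K(x^\ast)$. I would handle this by adopting the convexity of $K$ from Proposition~\ref{prop:dissipation_sufficient}, which is the setting in which $\Psi$ is the KKT residual \eqref{eq:KKT_residual}. I would state this restriction explicitly.

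If convexity of $K$ is to be avoided, I would instead try an argument based on (ii): every point of $K$ flows to $x^\ast$ with nonincreasing $\mathcal R$. This yields $\mathcal R(x^\ast)\le\mathcal R(y)$ for all $y\in K$ directly, without any convexity at all, because any $y\in K$ can serve as an initial condition. This alternative is actually cleaner, and I would present it as the primary argument for minimality. The convexity route would then be reserved only for uniqueness of the minimizer, where strict convexity, or alternatively uniqueness of the equilibrium from Theorem~\ref{thm:constructal_selection}(a), does the work.
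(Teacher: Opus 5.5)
Your items (i) and (ii) match the paper's proof: the chain rule plus dissipation gives monotonicity, and $x(t)\to x^\ast$ from Theorem~\ref{thm:constructal_selection} plus continuity of $\mathcal R$ identifies the limit. Bounding below by $\mathcal R\ge 0$ rather than by $\min_K\mathcal R$ makes no difference.

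The genuine difference is in (iii). The paper argues statically. It uses $\Psi(x^\ast)=0$ and the balance equivalence to get $0\in\partial_C\mathcal R(x^\ast)+N_K(x^\ast)$, then uses convexity to upgrade this Clarke stationarity to global optimality. As you rightly point out, that step silently needs $K$ convex, which the statement never assumes. Convexity of $K$ is needed for the normal-cone inequality to hold against every $y\in K$, and for ``$\mathcal R$ convex on $K$'' to be meaningful at all.

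Your primary argument instead starts the dynamics at an arbitrary $y\in K$:
\begin{itemize}
\item a solution from $y$ exists, by the persistence hypothesis of Theorem~\ref{thm:constructal_selection}, which the proposition tacitly inherits by referring to $x^\ast$;
\item that solution converges to the same $x^\ast$;
\item $\mathcal R$ is nonincreasing along it, so $\mathcal R(x^\ast)\le\mathcal R(y)$.
\end{itemize}
This is correct and strictly more general. Minimality of $x^\ast$ then needs neither convexity nor the balance equivalence, only global attractivity plus Lyapunov monotonicity.

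What the paper's route buys in return is the KKT characterization of $x^\ast$. That is exactly the bridge to the classical static optimum used in Section~\ref{sec:application_filippov_bejan}, and it uses only the pointwise property $\Psi(x^\ast)=0$. Your route buys independence from any geometric hypothesis on $K$. The price is that it needs convergence from every initial condition in $K$.

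For uniqueness, your midpoint argument with $K$ convex is what the paper's one-line ``strict convexity implies uniqueness'' amounts to. Drop the suggested alternative via uniqueness of the equilibrium in Theorem~\ref{thm:constructal_selection}(a). Nothing in the hypotheses makes a minimizer $y$ of $\mathcal R$ an equilibrium. At such a point, dissipation gives only $\sup_{v\in F(y)}\mathcal R^\circ(y;v)\le 0$, which does not force $0\in F(y)$. A trajectory from a second minimizer would just remain in $\arg\min_K\mathcal R$ and drift to $x^\ast$, producing no contradiction.
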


\begin{proof}
(i) Since $\mathcal R$ is locally Lipschitz on $K$ and $x(\cdot)$ is absolutely continuous,
the composition $t\mapsto \mathcal R(x(t))$ is absolutely continuous.
By Clarke’s chain rule,
\[
\frac{d}{dt}\mathcal R(x(t))
\le
\sup_{v\in F(x(t))}\mathcal R^\circ(x(t);v)
\quad \text{for a.e. } t\ge0.
\]
Using Assumption~\ref{ass:dissipation},
\[
\sup_{v\in F(x)}\mathcal R^\circ(x;v)\le -\alpha\Psi(x)\le 0,
\qquad x\in K,
\]
hence $\frac{d}{dt}\mathcal R(x(t))\le 0$ for a.e.\ $t\ge0$, so $\mathcal R(x(t))$ is nonincreasing.
Since $K$ is compact and $\mathcal R$ is continuous, $\mathcal R$ attains $\min_K \mathcal R$,
so $\mathcal R(x(t))$ is bounded below.

(ii) A nonincreasing function bounded below has a finite limit, so
$\mathcal R_\infty:=\lim_{t\to\infty}\mathcal R(x(t))$ exists.
Moreover $x(t)\to x^\ast$ as $t\to\infty$ by Theorem~\ref{thm:constructal_selection}.
By continuity of $\mathcal R$ on $K$,
\[
\lim_{t\to\infty}\mathcal R(x(t))=\mathcal R(x^\ast).
\]

(iii) If \eqref{eq:Psi_stationarity_equiv} holds, then $x^\ast\in\{x:\Psi(x)=0\}$ implies
$0\in \partial_C \mathcal R(x^\ast)+N_K(x^\ast)$, i.e.\ $x^\ast$ is Clarke-stationary
for $\mathcal R$ on $K$. If $\mathcal R$ is convex on $K$, Clarke-stationarity is the
first-order optimality condition for constrained convex minimization, hence $x^\ast\in\arg\min_K\mathcal R$.
Strict convexity implies uniqueness.
\end{proof}

\medskip

Proposition~\ref{prop:resistance_convergence} highlights the
complementary roles of the two structural mechanisms underlying
Constructal selection.
The dissipation inequality guarantees monotone decay of the global
resistance functional along admissible trajectories, providing the
thermodynamic direction of evolution.
Uniform contraction ensures convergence of the architectural state
toward the uniquely selected equilibrium $x^\ast$.

Together these mechanisms yield \emph{stabilization of access}:
architectural convergence implies resistance convergence, and the
asymptotic resistance equals the value $\mathcal R(x^\ast)$ attained at
the dynamically selected Constructal architecture.
In this sense, Constructal Law may be interpreted as a
\emph{dissipative contraction principle} governing the evolution of
finite-size transport architectures.

\section{Application: A Filippov Realization of Bejan’s Constructal Hierarchy}
\label{sec:application_filippov_bejan}

This section applies the autonomous Filippov selection framework
developed in Section~\ref{sec:model}
to the canonical area--to--point transport hierarchy studied by
Bejan, B\u{a}descu, and De~Vos \cite{BejanBadescuDeVos2000AE}.
In \cite{BejanBadescuDeVos2000AE} the hierarchy is obtained through
a sequence of nested resistance minimization problems defined at
successive assembly levels of a transport network.
We show that the same hierarchical architecture arises naturally as the
unique invariant configuration of a regime-dependent architectural
dynamics. In the present framework, architectural evolution is modeled
as an autonomous Filippov differential inclusion whose admissible
velocities correspond to resistance-reducing adjustments subject to
geometric feasibility constraints. Within this dynamical formulation, the optimal geometric ratios reported
in Table~1 of \cite{BejanBadescuDeVos2000AE} appear as equilibrium and
sliding conditions of the Filippov inclusion.
Consequently, the classical Constructal hierarchy can be interpreted as
the invariant configuration selected by a resistance-driven dynamical
law rather than solely as the solution of a sequence of static
optimization problems. Moreover, the resistance values obtained through the nested
minimization procedure in \cite{BejanBadescuDeVos2000AE} coincide with
the value of the resistance Lyapunov functional evaluated at the
selected invariant configuration.
This correspondence provides a dynamical interpretation of the
Constructal hierarchy: the optimal assembly ratios emerge as stationary
conditions of a resistance-dissipating regime-switching flow on the
finite admissible architecture set.

\subsection{Bejan’s elemental construct and explicit functional forms}
\label{subsec:bejan_elemental_functional_forms}

\paragraph{Physical primitives.}
Fix transport-cost (resistivity) coefficients
\[
K_0 > K_1 > K_2 > \cdots > K_p \;>\;0,
\]
where $K_{i-1}$ is the unit cost for \emph{access} inside a level-$i$ serviced
territory and $K_i$ is the unit cost for \emph{trunk transport} along the
level-$i$ main channel (this is exactly Bejan’s interpretation, with $K_0$
diffuse and $K_1$ trunk at the elemental level).

A uniform areal generation rate $\gamma>0$ over a serviced territory of area
$A_i$ produces a total stream
\[
m_i \;=\; \gamma A_i.
\]

\paragraph{Geometric decision variables.}
At hierarchy level $i$, the serviced territory is represented by a rectangle
with transverse size $H_i$ and longitudinal size $L_i$, so
\[
A_i = H_i L_i,
\qquad
r_i := \frac{H_i}{L_i} >0.
\]
Equivalently,
\[
H_i=\sqrt{A_i r_i},\qquad L_i=\sqrt{\frac{A_i}{r_i}}.
\]

\paragraph{Elemental resistance cost (Bejan’s Eq.~(1)).}
For $i=1$, Bejan~\cite{BejanBadescuDeVos2000ECM} gives the total cost (their $C_1$) as
\begin{equation}
C_1(H_1,L_1)
=
m_1\left(\frac14 K_0 H_1^2 + \frac12 K_1 L_1^2\right),
\label{eq:bejan_C1}
\end{equation}
subject to the area constraint $A_1=H_1L_1$.

\paragraph{General level-$i$ local--trunk cost functional form.}
To extend Bejan’s explicit calculus to all hierarchical levels while
preserving the optimal aspect ratios derived in
\cite{BejanBadescuDeVos2000AE},
we parameterize the level-$i$ local--trunk cost by the same separable
quadratic structure while allowing the geometric prefactors to be
level dependent:
\begin{equation}
C_i(H_i,L_i)
=
m_i\left(\alpha_i K_{i-1} H_i^2 + \beta_i K_i L_i^2\right),
\qquad i=1,\dots,p,
\label{eq:Ci_alpha_beta}
\end{equation}
with constants $\alpha_i,\beta_i>0$.
For $i=1$, Bejan’s Eq.~\eqref{eq:bejan_C1} corresponds to
\[
\alpha_1=\frac14,\qquad \beta_1=\frac12.
\]
For $i\ge 2$, the paper’s recursive geometry implies different effective
prefactors (because the level-$i$ rectangle is not assembled identically to
the level-1 rectangle; see Bejan’s discussion leading to Table~1).
We therefore \emph{choose} $(\alpha_i,\beta_i)$ for $i\ge2$ so that the
minimizing aspect ratio matches Table~1 exactly (this is an \emph{identified}
functional-form assumption, not an extra result).

\paragraph{Explicit minimization over the aspect ratio.}
Imposing $A_i=H_iL_i$ and substituting $H_i=\sqrt{A_i r_i}$,
$L_i=\sqrt{A_i/r_i}$ in \eqref{eq:Ci_alpha_beta} gives
\begin{equation}
\frac{C_i}{m_i}
=
\alpha_i K_{i-1} A_i r_i
+
\beta_i K_i \frac{A_i}{r_i}.
\label{eq:Ci_over_mi_in_ri}
\end{equation}
Differentiating in $r_i$ yields the unique minimizer
\begin{equation}
r_{i,\opt}
=
\frac{H_i}{L_i}\Big|_{\opt}
=
\sqrt{\frac{\beta_i}{\alpha_i}}\,
\sqrt{\frac{K_i}{K_{i-1}}}.
\label{eq:ri_opt_general}
\end{equation}
Moreover, substituting \eqref{eq:ri_opt_general} into \eqref{eq:Ci_over_mi_in_ri}
gives
\begin{equation}
\frac{C_{i,\min}}{m_i}
=
2\sqrt{\alpha_i\beta_i}\,\sqrt{K_{i-1}K_i}\,A_i^{1/2}.
\label{eq:Ci_min_general}
\end{equation}

\paragraph{Exact identification with \cite{BejanBadescuDeVos2000ECM}} shows that 
\[
\left(\frac{H_1}{L_1}\right)_{\opt}=2\frac{K_1}{K_0},
\qquad
\left(\frac{H_2}{L_2}\right)_{\opt}=\frac{K_2}{K_1},
\qquad
\left(\frac{H_i}{L_i}\right)_{\opt}=\frac{K_i}{K_{i-1}}\;\;(i\ge 3),
\]
and
\[
\frac{C_{1,\min}}{m_1}=\Big(\tfrac12 A_1 K_0K_1\Big)^{1/2},\qquad
\frac{C_{2,\min}}{m_2}=\Big(A_2 K_1K_2\Big)^{1/2},\qquad
\frac{C_{i,\min}}{m_i}=\Big(A_i K_{i-1}K_i\Big)^{1/2}\;\;(i\ge 3).
\]
These are matched by \eqref{eq:ri_opt_general}--\eqref{eq:Ci_min_general} if we set
\begin{equation}
(\alpha_1,\beta_1)=\Big(\tfrac14,\tfrac12\Big),\qquad
(\alpha_2,\beta_2)=\Big(\tfrac12,\tfrac12\Big),\qquad
(\alpha_i,\beta_i)=\Big(\tfrac12,\tfrac12\Big)\ \text{for } i\ge 3,
\label{eq:alpha_beta_choice}
\end{equation}
because then
$r_{1,\opt}=2(K_1/K_0)$ and $r_{i,\opt}=K_i/K_{i-1}$ for $i\ge2$, while
$2\sqrt{\alpha_1\beta_1}=\tfrac{1}{\sqrt2}$ and $2\sqrt{\alpha_i\beta_i}=1$
for $i\ge2$, reproducing the reported cost scalings exactly.

\medskip
\noindent
\textbf{Conclusion (functional-form assumption).}
Equation~\eqref{eq:Ci_alpha_beta} with the identified prefactors
\eqref{eq:alpha_beta_choice} is a \emph{closed-form} resistance functional
that is (i) algebraic, (ii) differentiable on $\{A_i>0,r_i>0\}$, and
(iii) yields \emph{exactly} Bejan’s optimal aspect ratios and minimized
costs (Table~1). This is the functional input needed to embed Bejan’s
hierarchy into the Filippov selection framework.

\subsection{Assembly variables and Bejan’s optimal branching numbers}
\label{subsec:bejan_branching_numbers}

\paragraph{Assembly law.}
Let $n_i\ge 1$ denote the number of level-$(i-1)$ constructs assembled into
one level-$i$ construct (Bejan’s $n_i$). Then
\begin{equation}
A_i = n_i A_{i-1},\qquad m_i = n_i m_{i-1} \qquad (i=2,\dots,p).
\label{eq:assembly_relations}
\end{equation}
Given $(A_1,n_2,\dots,n_p)$, all $A_i,m_i$ are determined recursively.

\paragraph{Bejan’s assembly optimization (Table~1).}
Bejan’s recursion (see the text around their Eq.~(9) and Table~1) yields the
optimal branching numbers
\begin{equation}
n_{2,\opt}=2\frac{K_0}{K_2},
\qquad
n_{i,\opt}=4\frac{K_{i-2}}{K_i}\quad (i\ge 3).
\label{eq:bejan_n_opt}
\end{equation}
We take \eqref{eq:bejan_n_opt} as the \emph{target balance relations} that
the dynamics must select.

\paragraph{A smooth proxy resistance functional in the assembly variables.}
To drive $n_i$ toward \eqref{eq:bejan_n_opt} with explicit calculus, we
introduce the strictly convex proxy penalties
\begin{equation}
\mathcal R_{\mathrm{br}}(n)
:=
\frac{\kappa_2}{2}\big(n_2-n_{2,\opt}\big)^2
+
\sum_{i=3}^p \frac{\kappa_i}{2}\big(n_i-n_{i,\opt}\big)^2,
\qquad \kappa_i>0,
\label{eq:R_branching_quadratic}
\end{equation}
on the convex domain $\{n_i\ge 1\}$.
This does not alter Bejan’s equilibrium values (it \emph{fixes} them) and is
precisely the type of explicit functional form required by our contraction
analysis: $\nabla^2 \mathcal R_{\mathrm{br}} \succeq \min_i\kappa_i\,I$.

\paragraph{Total resistance Lyapunov functional.}
Define the full architectural state
\[
x := (r_1,\dots,r_p,n_2,\dots,n_p) \in K,
\]
with admissible compact set
\begin{equation}
K
:=
\Big\{
x:\ r_i\in[\underline r,\overline r],\ n_i\in[1,\overline n]
\Big\},
\label{eq:K_compact_choice}
\end{equation}
for fixed bounds $0<\underline r<\overline r<\infty$ and $1<\overline n<\infty$
(chosen large enough to contain Bejan’s optimum).
Now set
\begin{equation}
\mathcal R(x)
:=
\sum_{i=1}^p \Big(\alpha_i K_{i-1} A_i r_i + \beta_i K_i A_i/r_i\Big)
\;+\;
\mathcal R_{\mathrm{br}}(n),
\label{eq:R_total}
\end{equation}
with $\alpha_i,\beta_i$ given by \eqref{eq:alpha_beta_choice} and $A_i$
generated from \eqref{eq:assembly_relations}. On $\operatorname{int}(K)$
this $\mathcal R$ is $C^2$ in $r$ and (by construction) $C^2$ in $n$. The unique minimizer $x^\ast$ of $\mathcal R$ over $K$ satisfies
\[
r_i^\ast=r_{i,\opt}\ \text{(Bejan Table~1)},\qquad
n_i^\ast=n_{i,\opt}\ \text{(Bejan Table~1)}.
\]

\subsection{A regime-dependent Filippov evolution law that realizes Bejan’s hierarchy}
\label{subsec:bejan_filippov_law}

\paragraph{Imbalance functions (switching coordinates).}
Define componentwise imbalances
\begin{equation}
h^{(r)}_i(x) := r_i-r_{i,\opt},
\qquad
h^{(n)}_i(x) := n_i-n_{i,\opt},
\label{eq:imbalance_components}
\end{equation}
and the global imbalance measure
\begin{equation}
\Psi(x)
:=
\sum_{i=1}^p \big(h^{(r)}_i(x)\big)^2
+
\sum_{i=2}^p \big(h^{(n)}_i(x)\big)^2.
\label{eq:Psi_bejan_application}
\end{equation}
Then $\Psi(x)=0$ if and only if $x$ satisfies Bejan’s Table~1 ratios exactly.

\paragraph{Discontinuous (sign) descent and Filippov regularization.}
We now specify a \emph{regime-dependent} autonomous vector field $f$ on $K$:
\begin{equation}
\dot r_i = -\eta_i\,\operatorname{sgn}\!\big(\partial_{r_i}\mathcal R(x)\big),
\qquad
\dot n_i = -\zeta_i\,\operatorname{sgn}\!\big(\partial_{n_i}\mathcal R(x)\big),
\label{eq:sign_descent}
\end{equation}
with gains $\eta_i,\zeta_i>0$, and $\operatorname{sgn}(0)$ left undefined.
Across the switching manifolds
\[
\Sigma^{(r)}_i:=\{x:\partial_{r_i}\mathcal R(x)=0\},\qquad
\Sigma^{(n)}_i:=\{x:\partial_{n_i}\mathcal R(x)=0\},
\]
the right-hand side jumps. Hence \eqref{eq:sign_descent} is interpreted as a
Filippov inclusion after (i) projection onto $K$ and (ii) convexification:
\begin{equation}
\dot x \in F(x)
:=
-\Pi_{T_K(x)}\,\operatorname{Sgn}\big(\nabla \mathcal R(x)\big),
\label{eq:filippov_bejan_inclusion}
\end{equation}
where $\operatorname{Sgn}$ is the set-valued sign map (componentwise
$\operatorname{Sgn}(u)=\{-1\}$ if $u>0$, $\{+1\}$ if $u<0$, and $[-1,1]$ if $u=0$),
and $\Pi_{T_K(x)}$ is projection onto the tangent cone of $K$.
This is a standard Filippov regularization of discontinuous steepest descent.

\paragraph{Exact dissipation inequality (Constructal dissipation).}
For any $v\in F(x)$,
\[
\mathcal R^\circ(x;v)
=
\langle \nabla \mathcal R(x),v\rangle
\le
-\sum_{j} |\partial_{x_j}\mathcal R(x)|,
\]
because $v$ selects (a projection of) the negative sign of each partial.
In particular, since $\mathcal R$ is smooth on $\operatorname{int}(K)$ and
coercively bounded on compact $K$, there exists $\alpha>0$ (depending on the
gain bounds and on $\nabla^2\mathcal R$ on $K$) such that the Lyapunov inequality
\begin{equation}
\sup_{v\in F(x)} \mathcal R^\circ(x;v)
\le -\alpha\,\Psi(x)
\qquad (x\in K)
\label{eq:dissipation_bejan_application}
\end{equation}
holds (this is the concrete verification of Assumption~\ref{ass:dissipation}
for the Bejan instance). Therefore $\mathcal R$ is nonincreasing along all
Filippov solutions and strictly decreases whenever $\Psi(x)>0$.

\paragraph{Sliding = dynamic enforcement of Bejan’s ratios.}
On any manifold where a partial derivative vanishes (e.g.\ $\partial_{r_i}\mathcal R=0$),
the Filippov set $\operatorname{Sgn}$ includes a continuum of admissible velocities
in that coordinate; the projection generates \emph{sliding} along the manifold.
Thus Bejan’s optimal ratios are realized dynamically as \emph{invariant sliding sets}
of the nonsmooth inclusion \eqref{eq:filippov_bejan_inclusion}.

\subsection{Contraction and uniqueness of the selected hierarchy}
\label{subsec:bejan_contraction}

Because $\mathcal R_{\mathrm{br}}$ is \emph{globally strongly convex} in $n$
by construction \eqref{eq:R_branching_quadratic}, and the $r$-part of
$\mathcal R$ is strictly convex in each $r_i$ on $[\underline r,\overline r]$
(see \eqref{eq:Ci_over_mi_in_ri}), the combined $\mathcal R$ is strongly convex
on $K$ after choosing bounds that exclude degeneracy.
Consequently, in a neighborhood $\mathcal N$ of the minimizer $x^\ast$,
the gradient-type sufficiency result
(Proposition~\ref{prop:contraction_sufficient}) applies to the smooth regimes
of \eqref{eq:filippov_bejan_inclusion} (and extends across switching by Clarke
convexification), yielding the uniform contraction inequality of
Assumption~\ref{ass:R_curvature} on $\mathcal N$.

Therefore, by the Constructal Selection Theorem
(Theorem~\ref{thm:constructal_selection}),
the autonomous Filippov inclusion \eqref{eq:filippov_bejan_inclusion}
selects a \emph{unique} invariant architecture $x^\ast\in K$ and all solutions
converge to it exponentially.
At that selected invariant architecture,
\[
r_i^\ast=r_{i,\opt},
\qquad
n_i^\ast=n_{i,\opt},
\]
so the Bejan hierarchy is recovered exactly as the unique globally attracting
Constructal architecture.

\subsection{Bejan vs.\ Filippov-Constructal: exact correspondence}
\label{subsec:bejan_vs_filippov_table}

\begin{table}[H]
\centering
\renewcommand{\arraystretch}{1.25}
\begin{tabular}{p{4.1cm} p{5.9cm} p{5.9cm}}
\toprule
& \textbf{Bejan--B\u{a}descu--De~Vos (static)} & \textbf{This paper (autonomous Filippov)}\\
\midrule

State variables
&
Design variables per level: $(H_i,L_i)$ and branching $n_i$; area $A_i=H_iL_i$.
&
Architectural state $x=(r_1,\dots,r_p,n_2,\dots,n_p)\in K$ with $r_i=H_i/L_i$;
$A_i$ generated via \eqref{eq:assembly_relations}. \\

Resistance functional
&
Costs $C_i$ minimized recursively; explicit elemental cost
$C_1=m_1(\tfrac14K_0H_1^2+\tfrac12K_1L_1^2)$.
&
Single Lyapunov functional $\mathcal R(x)$ in \eqref{eq:R_total}
built from Bejan-consistent quadratic local--trunk terms
\eqref{eq:Ci_alpha_beta} plus strongly convex branching penalty
\eqref{eq:R_branching_quadratic}. \\

Optimal aspect ratios
&
Table~1: $(H_1/L_1)_{\opt}=2K_1/K_0$,
$(H_2/L_2)_{\opt}=K_2/K_1$,
$(H_i/L_i)_{\opt}=K_i/K_{i-1}$ for $i\ge3$.
&
Equilibrium/sliding conditions $\partial_{r_i}\mathcal R=0$ yield
$r_i^\ast=r_{i,\opt}$ exactly via \eqref{eq:ri_opt_general}--\eqref{eq:alpha_beta_choice}. \\

Optimal branching numbers
&
Table~1: $n_{2,\opt}=2K_0/K_2$; $n_{i,\opt}=4K_{i-2}/K_i$ for $i\ge3$.
&
Equilibrium conditions $\partial_{n_i}\mathcal R=0$ (from \eqref{eq:R_branching_quadratic})
enforce $n_i^\ast=n_{i,\opt}$ exactly. \\

Minimized cost scaling
&
Table~1: $C_{1,\min}/m_1=(\tfrac12A_1K_0K_1)^{1/2}$;
$C_{2,\min}/m_2=(A_2K_1K_2)^{1/2}$;
$C_{i,\min}/m_i=(A_iK_{i-1}K_i)^{1/2}$ for $i\ge3$.
&
Value $\mathcal R(x^\ast)$ reproduces the same scalings because the $r$-part
of $\mathcal R$ attains \eqref{eq:Ci_min_general} with the Bejan-identified
prefactors \eqref{eq:alpha_beta_choice}. \\

Mechanism selecting the hierarchy
&
Recursive minimization at each assembly stage.
&
Dissipation + contraction:
$\dot x\in F(x)$ with $\mathcal R$ dissipative \eqref{eq:dissipation_bejan_application}
and incrementally stable (Assumption~\ref{ass:R_curvature}) selects a unique
globally attracting invariant hierarchy $x^\ast$. \\

Role of nonsmoothness
&
Implicit (choice between competing adjustments not modeled dynamically).
&
Explicit: regime switching occurs at $\partial_{x_j}\mathcal R=0$; Filippov
convexification yields sliding on Bejan balance manifolds and ensures existence
and uniqueness of the selected invariant architecture. \\
\bottomrule
\end{tabular}
\caption{Exact correspondence between Bejan’s constructal hierarchy (static optimization)
and its realization as the unique invariant architecture of an autonomous dissipative,
contracting Filippov inclusion in our framework.}
\label{tab:bejan_vs_filippov}
\end{table}

\section{Discussion: From Static Constructal Optimality to Nonsmooth Dynamical Selection}
\label{sec:discussion}

This section clarifies the conceptual contribution of the Filippov
formulation developed in this paper. In particular, we explain how the
framework extends the classical Constructal hierarchy of
Bejan, B\u{a}descu, and De~Vos
\cite{BejanBadescuDeVos2000AE}
and why the resulting viewpoint is useful beyond thermodynamic transport,
including in domains such as economic systems where constraints,
kinks, and regime changes are intrinsic features of the dynamics.

\subsection{What the Filippov formulation adds to Constructal Law}
\label{subsec:discussion_filippov_adds}

Constructal Law states that a finite-size flow system that persists in
time must evolve its configuration so as to provide progressively easier
access to the currents that flow through it
\cite{Bejan2000Book,BejanLorente2011,BejanLorente2013JAP}.
In most applications of the theory, this principle is operationalized
through static optimization: a resistance, dissipation, or entropy
functional is minimized under geometric or resource constraints
\cite{BejanTsatsaronisMoran1996,Rocha2006IJHMT,Soni2020ProcRSocA}.
The resulting architectures—such as the classical area--to--point
transport hierarchy—are obtained by solving nested optimization
problems across successive assembly levels. The Filippov formulation developed here replaces this static viewpoint
with a dynamical selection mechanism.
Rather than postulating optimality directly, the architecture evolves
according to a regime-dependent dynamical law whose trajectories
monotonically reduce global resistance while remaining feasible.
Optimal architectures then arise as invariant objects selected by the
dynamics.

\paragraph{(i) Persistence as viability on a finite configuration set.}

In the present framework, the Constructal requirement of ``finite size''
is represented by a compact admissible architecture set $K$.
Persistence in time is expressed through forward invariance and forward
completeness of the architectural dynamics.
Mathematically, these properties follow from standard viability
conditions for differential inclusions
\cite{Aubin1983DifferentialInclusions}.
This interpretation converts the qualitative Constructal statement
``a finite-size system that persists in time'' into precise dynamical
conditions on the admissible evolution law.

\paragraph{(ii) ``Progressively easier access'' as a nonsmooth Lyapunov principle.}

Constructal improvement of access is encoded by a resistance functional
$\mathcal R$ whose value decreases along admissible trajectories.
Because regime switching introduces kinks and nondifferentiabilities,
$\mathcal R$ is treated as a locally Lipschitz function and its evolution
is characterized using the Clarke directional derivative
\cite{Clarke1983}.
The resulting dissipation inequality ensures that resistance decreases
monotonically except on a balance set defined by a residual functional
$\Psi$.
This formulation replaces repeated optimization by a Lyapunov-type
condition applied along entire trajectories of the system.

\paragraph{(iii) Regime switching and irreversibility in the dynamics.}

Transport systems often operate under regime-dependent adjustment laws.
Capacity saturation, activation of additional transport paths,
or material constraints introduce discontinuities into the governing
equations.
Filippov’s theory of differential equations with discontinuous
right-hand sides provides a natural framework for describing such
dynamics
\cite{Filippov1988,diBernardo2008,Cortes2009}.
Within this setting, constraint activation corresponds to switching
manifolds in state space, and sustained operation along binding
constraints appears as sliding motion on these manifolds.
Thus constraints are not anomalies but structural components of the
architectural dynamics.

\paragraph{(iv) Architectural selection through contraction.}

Resistance dissipation alone generally ensures convergence only toward
a stationary set of configurations.
To explain why a unique architecture emerges, the paper introduces a
second mechanism: incremental stability via contraction analysis
\cite{LohmillerSlotine1998}.
Under suitable curvature conditions on the resistance functional and
mobility bounds on the architectural dynamics, admissible trajectories
converge exponentially toward one another.
This contraction property collapses the balance set to a single
globally attracting configuration.

In this dynamical interpretation, Constructal design is no longer an
externally imposed optimization result.
Instead, the selected architecture arises as the unique invariant
configuration of a dissipative regime-switching flow on a finite
configuration space.
Viability guarantees persistence, dissipation enforces improvement of
access, and contraction explains the uniqueness and stability of the
resulting architecture.
Together these mechanisms provide a dynamical realization of
Constructal Law consistent with its original interpretation as a
principle governing the evolution of flow architectures
\cite{Bejan2000Book,BejanLorente2011}.

\subsection{How the Bejan area--to--point application is extended by the framework}
\label{subsec:discussion_bejan_extension}

The area--to--point transport problem analyzed by
Bejan, B\u{a}descu, and De~Vos \cite{BejanBadescuDeVos2000AE}
is canonical within Constructal theory because it yields explicit
hierarchical ratios and scaling relations under resistance
minimization on a finite territory.
In the present framework this hierarchy is embedded into a dynamical
architecture space governed by a Filippov differential inclusion.
As a result, the classical ratios acquire a new dynamical
interpretation and generate additional structural predictions.

\paragraph{(i) Optimal ratios as switching manifolds.}

In the static derivation of the hierarchy,
optimal ratios appear as algebraic equalities obtained from
first-order optimality conditions.
Within the Filippov formulation these same relations define
codimension-one hypersurfaces in the architectural state space.
Formally, they arise through switching functions $h_i(x)$ that
partition the configuration space into regimes with distinct
architectural adjustment laws.
The Constructal ratios therefore represent geometric thresholds in
the dynamics rather than merely optimal parameter values.
Such switching structures are typical of piecewise-smooth dynamical
systems \cite{Filippov1988,diBernardo2008}.

\paragraph{(ii) Marginal balance as sliding invariance.}

In the classical optimization formulation,
the equality of marginal resistance contributions follows from
first-order optimality conditions of the minimization problem
\cite{BejanBadescuDeVos2000AE}.
In the Filippov setting this condition emerges dynamically.
When a trajectory reaches a switching manifold,
the convexified velocity set may contain directions tangent to that
manifold, satisfying the sliding condition
$\langle\nabla h_i(x),v\rangle=0$.
Sliding motion therefore represents sustained evolution at a
constructal balance between competing resistance contributions.
In this sense, the equality of marginal resistances becomes a
dynamical invariance condition rather than a purely static
optimality statement.

\paragraph{(iii) The hierarchy as an invariant attractor.}

The framework recovers the classical scaling relations through the
interaction of two mechanisms.
First, the resistance dissipation inequality drives trajectories
toward the balance set where resistance improvement ceases.
Second, the contraction property established earlier ensures
incremental exponential stability of the architectural dynamics
\cite{LohmillerSlotine1998}.
Together these mechanisms collapse the balance set to a unique
globally attracting configuration.
Consequently, the Bejan hierarchy emerges not merely as a computed
minimizer but as the invariant attractor of the resistance-driven
dynamics.

\paragraph{(iv) Structural implications beyond static optimization.}

Interpreting the Constructal ratios as switching and sliding
structures leads to several extensions of the classical model.
First, irreversible constraints that prevent instantaneous
attainment of the optimum can be incorporated naturally.
Second, the model predicts transient path dependence before the
architecture converges to the attractor.
Third, regime-dependent transport laws can coexist within a unified
dynamical description.
Finally, stability and robustness properties—such as convergence
rates and recovery after perturbations—become analyzable within the
framework of contraction and nonsmooth dynamics.
These features are not accessible in the purely static
minimization approach traditionally used in Constructal theory
\cite{BejanLorente2011,BejanLorente2013JAP}.

\subsection{Why this matters for economics: Constructal selection under kinks, caps, and regimes}
\label{subsec:discussion_econ_value}

Many economic systems can be interpreted as \emph{flow architectures}.
Goods, energy, information, capital, and labor move through networks
whose structure is shaped by capacity limits, institutional constraints,
and discrete technological regimes.
Constructal theory has already been used to interpret economic
organization and growth as consequences of flow-access optimization
\cite{BejanBadescuDeVos2000AE,BejanErreraGunes2020}.
However, most economic models remain static or rely on smooth adjustment
laws, whereas real economic environments are characterized by
kinks, caps, and regime changes.
The Filippov framework developed in this paper provides a natural
mathematical language for such systems.

\paragraph{(i) Regime switching as a structural feature of economic adjustment.}

In many economic contexts the adjustment rules governing the system
change abruptly when constraints become active.
Examples include price caps and floors, binding borrowing limits,
collateral thresholds, inventory stockouts, capacity limits in
logistics networks, congestion pricing, or platform governance rules.
Such features introduce discontinuities in the adjustment dynamics.
Within the present framework these changes are represented as switching
manifolds in the architectural state space, across which the governing
vector field changes regime.
This representation aligns economic adjustment with the theory of
piecewise-smooth dynamical systems \cite{Filippov1988,diBernardo2008}.

\paragraph{(ii) Sliding as persistent operation at binding constraints.}

Economic systems frequently operate directly on binding constraints for
extended periods of time.
Credit constraints bind across business cycles,
electricity networks reach transmission limits during peak demand,
transport networks experience persistent congestion,
and regulatory ceilings bind in controlled markets.
In the Filippov formulation such situations correspond to
\emph{sliding motion} along switching manifolds.
Sliding dynamics therefore provide a rigorous mathematical description
of sustained operation at capacity limits while preserving well-posed
system trajectories.

\paragraph{(iii) ``Progressively easier access'' as structural economic adaptation.}

In an economic setting the notion of access can be interpreted as an
aggregate impedance measure—transaction costs, congestion costs,
search frictions, energy losses, delay costs, or risk premia.
The dissipation inequality introduced in this paper states that the
architecture of the system evolves so that this global impedance
does not increase along admissible trajectories.
Instead of repeatedly solving static optimization problems under
changing constraints, the framework models structural adaptation as a
continuous adjustment process in which network capacities, routing
shares, investment allocations, or institutional rules evolve in a
direction that improves access to flows.

\paragraph{(iv) Contraction and institutional determinacy.}

Multiplicity of equilibria and path dependence are central issues in
economic theory.
In the present framework dissipation alone may still permit multiple
limit sets.
Contraction provides the additional structural mechanism that removes
this multiplicity by enforcing incremental stability
\cite{LohmillerSlotine1998}.
When the resistance landscape is sufficiently curved and adjustment
responses are sufficiently responsive, all admissible trajectories
converge toward a unique architecture.
Economically, this implies three properties:
determinacy of the selected network or institutional configuration,
recoverability after shocks, and comparative statics that describe how
the dynamically selected architecture moves as parameters change.

\paragraph{(v) Mapping the framework to economic objects.}

The dynamical formulation admits a direct conceptual translation into
economic terms:
\begin{itemize}[leftmargin=1.5em]
\item architectural state $x$: capacities, routing shares, investment
allocation, or institutional parameters governing flows;
\item access functional $\mathcal R(x)$: aggregate economic impedance
such as congestion costs, transaction costs, delays, or risk-adjusted
losses;
\item switching manifolds: thresholds created by policy constraints,
capacity limits, or technological regimes;
\item sliding motion: persistent binding of caps, quotas, or
constraints;
\item constructal architecture $x^\ast$: the dynamically selected
stable configuration of the economic network or institutional system.
\end{itemize}

Viewed in this way, Constructal Law becomes a principle governing the
evolution of economic flow architectures under constraints.
The Filippov framework extends the classical constructal perspective
from static optimality to dynamical structural selection,
making it applicable to economic systems in which regime switching and
binding constraints are intrinsic features of the environment.

\subsection{Interpretive comparison: optimization vs.\ dynamical selection}
\label{subsec:discussion_opt_vs_dyn}

The classical Constructal formulation and the present dynamical
framework address related but distinct questions.

\begin{itemize}[leftmargin=1.5em]

\item \textbf{Static constructal optimization:}
``What configuration minimizes resistance under the given
finite-size constraints?''

\item \textbf{Filippov constructal dynamics:}
``Under irreversible constraints and regime-dependent adjustment,
what configuration is dynamically selected and how stable is it
under admissible perturbations?''

\end{itemize}

The second question is strictly stronger.
It requires not only identifying an optimal configuration but also
explaining how that configuration emerges from a feasible evolutionary
process.
In particular, the dynamical viewpoint requires four structural
ingredients. First, the architectural evolution must remain well posed even when the
adjustment law is discontinuous.
Filippov’s theory of differential equations with discontinuous
right-hand sides provides the appropriate framework for this purpose
\cite{Filippov1988,diBernardo2008}. Second, persistence of the architecture must be guaranteed.
In the present framework this requirement is expressed through
viability and forward invariance of the admissible configuration set,
as studied in the theory of differential inclusions
\cite{Aubin1983DifferentialInclusions}. Third, the Constructal principle of ``progressively easier access''
must hold along admissible trajectories.
This is encoded through the resistance dissipation inequality,
which ensures that the global access functional decreases along
solutions of the architectural dynamics. Finally, a mechanism is required that explains why a unique
architecture emerges.
In this paper that mechanism is incremental stability via contraction
analysis, which enforces convergence of all admissible trajectories
toward a common configuration
\cite{LohmillerSlotine1998}.

Taken together, the Filippov representation of regime switching,
the dissipation inequality for access improvement,
and the contraction property for incremental stability form a
coherent dynamical framework for Constructal evolution.
Within this perspective, architectural design is not imposed by
external optimization but arises as the invariant configuration
selected by the resistance-driven dynamics on the finite admissible
configuration space.

\subsection{Limitations and directions enabled by the framework}
\label{subsec:discussion_limitations}

Two limitations of the present framework are immediately visible.
Rather than weaknesses, they indicate natural directions in which
the dynamical formulation of Constructal evolution can be extended.

\paragraph{(i) When contraction fails.}

The uniqueness and robustness results of this paper rely on the
incremental contraction condition.
If this condition is violated---for instance because the resistance
landscape is only weakly convex, the mobility field is strongly
state dependent, or positive feedback dominates the adjustment
dynamics---then multiplicity may arise.
In such cases the dissipation inequality may still drive trajectories
toward the balance set, but different initial conditions may converge
to different admissible limits, or distinct sliding selections may
occur on switching manifolds.
From an economic or engineering perspective this outcome is not
pathological.
Multiplicity and path dependence are widely observed in constrained
network systems, and the framework developed here provides a
structured way to identify the precise structural conditions under
which determinacy can be restored.

\paragraph{(ii) Beyond autonomous dynamics.}

The analysis in this paper focuses on autonomous architectural
dynamics.
Many real systems, however, operate under exogenous forcing.
Examples include seasonal demand fluctuations, diurnal load cycles
in energy networks, or periodic policy interventions.
In such environments the natural dynamical objects are not fixed
equilibria but periodic invariant sets.
The Filippov formulation extends naturally to time-dependent
differential inclusions
\cite{Filippov1988,diBernardo2008},
in which the selected ``Constructal architecture'' would correspond
to a stable periodic orbit rather than a stationary configuration.
Developing a complete theory of contraction and dissipation for such
time-varying Filippov systems is therefore a promising direction for
future work.

\medskip
\noindent
\textbf{Summary.}
The main conceptual implication of the framework can be stated
succinctly.
The Filippov embedding transforms Constructal Law from a static
extremum principle into a dynamical theory of architectural
selection.
Within this perspective, finite-size flow systems evolve on a viable
configuration space under a resistance-dissipating dynamics whose
incremental stability determines whether a unique architecture is
selected.
This stability-based viewpoint provides a natural explanation of
determinacy and robustness in constrained transport and economic
network systems.

\section{Conclusion}
\label{sec:conclusion}

This paper develops a dynamical formulation of Constructal Law for
finite-size transport systems subject to irreversibility,
geometric constraints, and regime-dependent transport laws.
Architectural evolution is modeled as an autonomous
Filippov differential inclusion on a compact,
forward-invariant admissible set.
Discontinuities arise from the activation of transport thresholds
and unilateral constraints, while sliding motion represents
sustained operation at constructal balance conditions.
Two structural mechanisms govern architectural selection. A resistance dissipation inequality, formulated through the
Clarke generalized directional derivative,
ensures monotone decay of a global resistance functional
along admissible trajectories.
This nonsmooth Lyapunov condition encodes the principle of
“progressively easier access’’ and drives trajectories toward
the stationary balance set defined by constructal relations. A uniform contraction condition, expressed as a spectral negativity
bound on admissible generalized Jacobians in a weighted metric,
guarantees exponential convergence of trajectories.
Contraction collapses the balance set to a single invariant
configuration and yields global incremental stability of the
architectural dynamics.

\medskip
Under these conditions the Filippov inclusion admits a unique
globally attracting invariant set.
In the autonomous case this set reduces to an equilibrium,
while under periodic forcing it becomes a stable limit cycle.
This invariant object is termed the
\emph{Constructal architecture}.
Constructal design therefore emerges not as an imposed
optimization principle but as a stability property of a
dissipative constraint-driven dynamical system.

Application to the classical area--to--point hierarchy of
Bejan--B\u{a}descu--De~Vos shows that the known constructal
scaling relations arise naturally within this framework.
The optimal geometric ratios appear as switching manifolds
in architectural state space,
while sliding along these manifolds corresponds to equality of
marginal resistance contributions.
Dissipation drives trajectories toward their intersection and
contraction ensures that this intersection collapses to a unique
globally attracting architecture.

More generally, the results show that hierarchical transport
structures can be interpreted as invariant configurations of
dissipative, contracting nonsmooth dynamical systems.
The framework therefore extends Constructal theory to hybrid
transport regimes, binding capacity limits, and multi-level
architectures while preserving the characteristic scaling
relations within a rigorous dynamical formulation.


\appendix
\section{Appendix}
\label{sec:appendix}

\subsection{Constructal Law Embedded in Model, Theory, and Application}
\label{subsec:constructal_embedding_table}

Table~\ref{tab:constructal_embedding}
provides a structural correspondence between:

\begin{itemize}
\item[(i)] the clauses of Constructal Law,
\item[(ii)] their representation in the autonomous Filippov model,
\item[(iii)] their realization in the dissipation--invariance--contraction
selection theory, and
\item[(iv)] their concrete instantiation in the Bejan area--to--point hierarchy.
\end{itemize}

The correspondence is not heuristic but structural:
classical constructal ratios appear as switching manifolds
and sliding invariant sets of the Filippov inclusion;
resistance dissipation drives trajectories toward the
stationary balance set;
uniform contraction collapses this set to a single
globally attracting architecture.

\begin{table}[H]
\centering
\caption{Structural embedding of Constructal Law in model, theory, and application.}
\label{tab:constructal_embedding}
\renewcommand{\arraystretch}{1.25}
\resizebox{\textwidth}{!}{%
\begin{tabular}{p{3.2cm} p{4.4cm} p{4.6cm} p{4.6cm}}
\toprule
\textbf{Constructal clause}
& \textbf{Model-level formulation}
& \textbf{Theory-level realization}
& \textbf{Bejan area--to--point instantiation} \\
\midrule

Finite-size system
&
Admissible architectural set $K\subset\mathbb R^n$ compact
(Assumption~\ref{ass:finite_architecture})
&
Compactness ensures bounded trajectories,
existence of $\omega$-limit sets,
and lower boundedness of $\mathcal R$
&
Simplex of serviced territories
$\mathcal K=\{A\ge0:\sum_i A_i\le A_{\max}\}$ (compact feasible region) \\

Persists in time
&
Forward invariance:
$F(x)\cap T_K(x)\neq\emptyset$
&
Viability (Nagumo condition) implies
global forward existence and feasibility
&
Nonnegativity and total-area constraint preserved
under hierarchical dynamics \\

Flow system with conserved currents
&
Separation of time scales:
fast transport $\Rightarrow$ induced resistance $\mathcal R(x)$
&
Well-posed transport problem induces
locally Lipschitz global resistance functional
&
Uniform generation $\gamma$:
level flows $m_i=\gamma A_i$ satisfy conservation \\

Evolves its configuration
&
Autonomous Filippov inclusion
$\dot x\in F(x)$ with regime switching
&
Existence of absolutely continuous trajectories
through discontinuities via convexification
&
Piecewise refinement/aggregation
across scale thresholds \\

Constraints become binding
&
Switching hypersurfaces
$\Sigma_i=\{h_i(x)=0\}$,
$\nabla h_i\neq0$
&
State space partitioned into smooth regimes;
Filippov sliding along $\Sigma_i$
&
Constructal ratios define
$h_i(A)=n_{i+1,\opt}-A_{i+1}/A_i$;
ratio manifolds are switching sets \\

Progressively easier access
&
Global resistance functional
$\mathcal R:K\to\mathbb R_{\ge0}$
&
\textbf{Dissipation inequality:}
$\dot{\mathcal R}(x(t))\le -\alpha\Psi(x(t))\le0$
$\Rightarrow$ monotone decay and
$\omega(x_0)\subseteq\mathcal E$
&
$\mathcal R$ equals total transport resistance;
$\Psi$ measures inter-level imbalance
(deviation from constructal ratios) \\

Operation at thresholds
&
Sliding motion:
$\exists v\in F(x)$ with
$\langle\nabla h_i(x),v\rangle=0$
&
Sliding invariant sets compatible with
dissipation via convexity of
$\mathcal R^\circ$
&
Constructal ratio manifolds
$\mathcal M_i=\{A: A_{i+1}/A_i=n_{i+1,\opt}\}$
are sliding invariant sets \\

Architectural selection
&
Uniform contraction in weighted metric
(Assumption~\ref{ass:R_curvature})
&
\textbf{Incremental stability:}
$\|x(t)-y(t)\|\le Ce^{-\nu t}\|x(0)-y(0)\|$
$\Rightarrow$ unique invariant set
&
Unique globally attracting hierarchy
(Theorem~\ref{thm:constructal_selection}) \\

What is selected
&
Constructal architecture:
unique globally attracting equilibrium
&
Dissipation + LaSalle + contraction:
$x^\ast\in\mathcal E$ and unique
&
Selected hierarchy satisfies
all dynamically active constructal ratios;
intersection
$\mathcal E=\bigcap_{i\in\mathcal I_{\rm act}}\mathcal M_i$ \\

Robustness
&
Uniform metric equivalence on compact $K$
&
Exponential stability:
perturbations decay at rate $\nu$
&
Hierarchy re-forms after admissible shocks
within $\mathcal K$ \\

\bottomrule
\end{tabular}%
}
\end{table}

\end{document}